\newwrite\@unused
\def\typeout#1{{\let\protect\string\immediate\write\@unused{#1}}}
\def\@nnil{\@nil}
\def\@empty{}
\def\@psdonoop#1\@@#2#3{}
\def\@psdo#1:=#2\do#3{\edef\@psdotmp{#2}\ifx\@psdotmp\@empty \else
    \expandafter\@psdoloop#2,\@nil,\@nil\@@#1{#3}\fi}
\def\@psdoloop#1,#2,#3\@@#4#5{\def#4{#1}\ifx #4\@nnil \else
       #5\def#4{#2}\ifx #4\@nnil \else#5\@ipsdoloop #3\@@#4{#5}\fi\fi}
\def\@ipsdoloop#1,#2\@@#3#4{\def#3{#1}\ifx #3\@nnil 
       \let\@nextwhile=\@psdonoop \else
      #4\relax\let\@nextwhile=\@ipsdoloop\fi\@nextwhile#2\@@#3{#4}}
\def\@tpsdo#1:=#2\do#3{\xdef\@psdotmp{#2}\ifx\@psdotmp\@empty \else
    \@tpsdoloop#2\@nil\@nil\@@#1{#3}\fi}
\def\@tpsdoloop#1#2\@@#3#4{\def#3{#1}\ifx #3\@nnil 
       \let\@nextwhile=\@psdonoop \else
      #4\relax\let\@nextwhile=\@tpsdoloop\fi\@nextwhile#2\@@#3{#4}}
\def\psdraft{
	\def\@psdraft{0}
}
\def\psfull{
	\def\@psdraft{100}
}
\newif\if@prologfile
\newif\if@postlogfile
\newif\if@bbllx
\newif\if@bblly
\newif\if@bburx
\newif\if@bbury
\newif\if@height
\newif\if@width
\newif\if@rheight
\newif\if@rwidth
\newif\if@clip
\def\@p@@sclip#1{\@cliptrue}
\def\@p@@sfile#1{
		   \def\@p@sfile{#1}
}
\def\@p@@sfigure#1{\def\@p@sfile{#1}}
\def\@p@@sbbllx#1{
		\@bbllxtrue
		\dimen100=#1
		\edef\@p@sbbllx{\number\dimen100}
}
\def\@p@@sbblly#1{
		\@bbllytrue
		\dimen100=#1
		\edef\@p@sbblly{\number\dimen100}
}
\def\@p@@sbburx#1{
		\@bburxtrue
		\dimen100=#1
		\edef\@p@sbburx{\number\dimen100}
}
\def\@p@@sbbury#1{
		\@bburytrue
		\dimen100=#1
		\edef\@p@sbbury{\number\dimen100}
}
\def\@p@@sheight#1{
		\@heighttrue
		\dimen100=#1
   		\edef\@p@sheight{\number\dimen100}
}
\def\@p@@swidth#1{
		\@widthtrue
		\dimen100=#1
		\edef\@p@swidth{\number\dimen100}
}
\def\@p@@srheight#1{
		\@rheighttrue
		\dimen100=#1
		\edef\@p@srheight{\number\dimen100}
}
\def\@p@@srwidth#1{
		\@rwidthtrue
		\dimen100=#1
		\edef\@p@srwidth{\number\dimen100}
}
\def\@p@@sprolog#1{\@prologfiletrue\def\@prologfileval{#1}}
\def\@p@@spostlog#1{\@postlogfiletrue\def\@postlogfileval{#1}}
\def\@cs@name#1{\csname #1\endcsname}
\def\@setparms#1=#2,{\@cs@name{@p@@s#1}{#2}}
\def\ps@init@parms{
		\@bbllxfalse \@bbllyfalse
		\@bburxfalse \@bburyfalse
		\@heightfalse \@widthfalse
		\@rheightfalse \@rwidthfalse
		\def\@p@sbbllx{}\def\@p@sbblly{}
		\def\@p@sbburx{}\def\@p@sbbury{}
		\def\@p@sheight{}\def\@p@swidth{}
		\def\@p@srheight{}\def\@p@srwidth{}
		\def\@p@sfile{}
		\def\@p@scost{10}
		\def\@sc{}
		\@prologfilefalse
		\@postlogfilefalse
		\@clipfalse
}
\def\parse@ps@parms#1{
	 	\@psdo\@psfiga:=#1\do
		   {\expandafter\@setparms\@psfiga,}}
\newif\ifno@bb
\newif\ifnot@eof
\newread\ps@stream
\def\bb@missing{
	\typeout{psfig: searching \@p@sfile \space  for bounding box}
	\openin\ps@stream=\@p@sfile
	\no@bbtrue
	\not@eoftrue
	\catcode`\%=12
	\loop
		\read\ps@stream to \line@in
		\global\toks200=\expandafter{\line@in}
		\ifeof\ps@stream \not@eoffalse \fi
		\@bbtest{\toks200}
		\if@bbmatch\not@eoffalse\expandafter\bb@cull\the\toks200\fi
	\ifnot@eof \repeat
	\catcode`\%=14
}	
\newif\if@bbmatch
\def\@bbtest#1{\expandafter\@a@\the#1
\long\def\@a@#1
\long\def\bb@cull#1 #2 #3 #4 #5 {
	\dimen100=#2 bp\edef\@p@sbbllx{\number\dimen100}
	\dimen100=#3 bp\edef\@p@sbblly{\number\dimen100}
	\dimen100=#4 bp\edef\@p@sbburx{\number\dimen100}
	\dimen100=#5 bp\edef\@p@sbbury{\number\dimen100}
	\no@bbfalse
}
\catcode`\%=14
\def\compute@bb{
		\no@bbfalse
		\if@bbllx \else \no@bbtrue \fi
		\if@bblly \else \no@bbtrue \fi
		\if@bburx \else \no@bbtrue \fi
		\if@bbury \else \no@bbtrue \fi
		\ifno@bb \bb@missing \fi
		\ifno@bb \typeout{FATAL ERROR: no bb supplied or found}
			\no-bb-error
		\fi
		\count203=\@p@sbburx
		\count204=\@p@sbbury
		\advance\count203 by -\@p@sbbllx
		\advance\count204 by -\@p@sbblly
		\edef\@bbw{\number\count203}
		\edef\@bbh{\number\count204}
}
%
%
\def\in@hundreds#1#2#3{\count240=#2 \count241=#3
		     \count100=\count240	
		     \divide\count100 by \count241
		     \count101=\count100
		     \multiply\count101 by \count241
		     \advance\count240 by -\count101
		     \multiply\count240 by 10
		     \count101=\count240	
		     \divide\count101 by \count241
		     \count102=\count101
		     \multiply\count102 by \count241
		     \advance\count240 by -\count102
		     \multiply\count240 by 10
		     \count102=\count240	
		     \divide\count102 by \count241
		     \count200=#1\count205=0
		     \count201=\count200
			\multiply\count201 by \count100
		 	\advance\count205 by \count201
		     \count201=\count200
			\divide\count201 by 10
			\multiply\count201 by \count101
			\advance\count205 by \count201
		     \count201=\count200
			\divide\count201 by 100
			\multiply\count201 by \count102
			\advance\count205 by \count201
		     \edef\@result{\number\count205}
}
\def\compute@wfromh{
		\in@hundreds{\@p@sheight}{\@bbw}{\@bbh}
		\edef\@p@swidth{\@result}
}
\def\compute@hfromw{
		\in@hundreds{\@p@swidth}{\@bbh}{\@bbw}
		\edef\@p@sheight{\@result}
}
\def\compute@handw{
		\if@height 
			\if@width
			\else
				\compute@wfromh
			\fi
		\else 
			\if@width
				\compute@hfromw
			\else
				\edef\@p@sheight{\@bbh}
				\edef\@p@swidth{\@bbw}
			\fi
		\fi
}
\def\compute@resv{
		\if@rheight \else \edef\@p@srheight{\@p@sheight} \fi
		\if@rwidth \else \edef\@p@srwidth{\@p@swidth} \fi
}
%
\def\compute@sizes{
	\compute@bb
	\compute@handw
	\compute@resv
}
%
%
\def\psfig#1{\vbox {
	%
	\ps@init@parms
	\parse@ps@parms{#1}
	\compute@sizes
	\ifnum\@p@scost<\@psdraft{
		\typeout{psfig: including \@p@sfile \space }
		\special{ps::[begin] 	\@p@swidth \space \@p@sheight \space
				\@p@sbbllx \space \@p@sbblly \space
				\@p@sbburx \space \@p@sbbury \space
				startTexFig \space }
		\if@clip{
			\typeout{(clip)}
			\special{ps:: \@p@sbbllx \space \@p@sbblly \space
				\@p@sbburx \space \@p@sbbury \space
				doclip \space }
		}\fi
		\if@prologfile
		    \special{ps: plotfile \@prologfileval \space } \fi
		\special{ps: plotfile \@p@sfile \space }
		\if@postlogfile
		    \special{ps: plotfile \@postlogfileval \space } \fi
		\special{ps::[end] endTexFig \space }
		\vbox to \@p@srheight true sp{
			\hbox to \@p@srwidth true sp{
				\hfil
			}
		\vfil
		}
	}\else{
		\vbox to \@p@srheight true sp{
		\vss
			\hbox to \@p@srwidth true sp{
				\hss
				\@p@sfile
				\hss
			}
		\vss
		}
	}\fi
}}
\catcode`\@=12\relax

\def\theequation{\thesection.\arabic{equation}}

\setlength{\topmargin}{-.6in}
\setlength{\textheight}{8.75in}
\setlength{\textwidth}{6in}
\setlength{\oddsidemargin}{3.5ex}
\setlength{\evensidemargin}{0 pt}
\setlength{\headsep}{.5in}
\setlength{\footskip}{.5in}

\begin{document}

\font\bbbld=msbm10 scaled\magstep1
\newcommand{\bfR}{\hbox{\bbbld R}}
\newcommand{\bfC}{\hbox{\bbbld C}}
\newcommand{\bfZ}{\hbox{\bbbld Z}}
\newcommand{\bfH}{\hbox{\bbbld H}}
\newcommand{\bfQ}{\hbox{\bbbld Q}}
\newcommand{\bfN}{\hbox{\bbbld N}}
\newcommand{\bfP}{\hbox{\bbbld P}}
\newcommand{\bfT}{\hbox{\bbbld T}}
\def\Sym{\mathop{\rm Sym}}
\newcommand{\halo}[1]{\Int(#1)}
\def\Int{\mathop{\rm Int}}
\def\Re{\mathop{\rm Re}}
\def\Im{\mathop{\rm Im}}
\newcommand{\union}{\cup}
\newcommand{\goesto}{\rightarrow}
\newcommand{\bdy}{\partial}
\newcommand{\n}{\noindent}
\newcommand{\p}{\hspace*{\parindent}}

\newtheorem{theorem}{Theorem}[section]
\newtheorem{assertion}{Assertion}[section]
\newtheorem{proposition}{Proposition}[section]
\newtheorem{lemma}{Lemma}[section]
\newtheorem{definition}{Definition}[section]
\newtheorem{claim}{Claim}[section]
\newtheorem{corollary}{Corollary}[section]
\newtheorem{observation}{Observation}[section]
\newtheorem{conjecture}{Conjecture}[section]
\newtheorem{question}{Question}[section]
\newtheorem{example}{Example}[section]

\newbox\qedbox
\setbox\qedbox=\hbox{$\Box$}
\newenvironment{proof}{\smallskip\noindent{\bf Proof.}\hskip \labelsep}%
                        {\hfill\penalty10000\copy\qedbox\par\medskip}
\newenvironment{remark}{\smallskip\noindent{\bf Remark.}\hskip \labelsep}%
                        {\hfill\penalty10000\copy\qedbox\par\medskip}
\newenvironment{remark1}{\smallskip\noindent{\bf Remark 1.}\hskip \labelsep}%
                        {\hfill\penalty10000\copy\qedbox\par\medskip}
\newenvironment{remark2}{\smallskip\noindent{\bf Remark 2.}\hskip \labelsep}%
                        {\hfill\penalty10000\copy\qedbox\par\medskip}
\newenvironment{proofspec}
         {\smallskip\noindent{\bf Proof of Theorem 6.1.}\hskip \labelsep}%
                        {\hfill\penalty10000\copy\qedbox\par\medskip}
\newenvironment{acknowledgements}{\smallskip\noindent{\bf Acknowledgements.}%
        \hskip\labelsep}{}

\setlength{\baselineskip}{1.0\baselineskip}

\title{The Morse Index of Wente Tori}
\author{{\sc Wayne Rossman}\\
{Faculty of Science, Kobe Univ., Kobe, Japan 657-8501}} 
\date{}
\maketitle

\begin{abstract}
We find various lower and upper bounds for the index of Wente tori 
that contain a continuous family of planar principal curves.  
We then prove a result that gives an algorithm for computing 
the index sharply.  
\end{abstract}

\section{Introduction}

Hopf conjectured that every constant mean curvature (CMC) closed 
(closed = compact, without boundary) 
surface in $\bfR^3$ is a round sphere.  This is 
true if the surface is assumed to be 
either genus 0 or embedded \cite{H}, but in general 
it is false, and the first counterexamples (with genus 1) 
were found by Wente \cite{We}.  
Abresch \cite{A} then gave a more explicit representation for 
those Wente tori which contain a continuous family of planar principal 
curves, referred to here as {\em symmetric} Wente tori.  
Walter \cite{Wa} later found an even more explicit 
representation for symmetric Wente tori, by noticing that if one 
family of 
principal curves are all planar, then the perpendicular 
principal curves each lie in a sphere.  Finally, Spruck \cite{Sp} showed 
that these symmetric Wente tori are exactly the same surfaces 
that Wente originally found.  
Pinkall and Sterling \cite{PS} and Bobenko \cite{B} went on to 
classify all closed 
CMC tori in $\bfR^3$, and Kapouleas constructed closed 
CMC surfaces for every genus greater than one \cite{K1}, \cite{K2}.

The index of minimal surfaces in $\bfR^3$ has been well studied.  
In this case there are no closed surfaces.  
Do Carmo and Peng \cite{CP} showed that the only stable (index 0) 
complete minimal surface is a plane.  Fischer-Colbrie 
\cite{FC} showed that minimal surfaces have finite index if and only 
if they have finite total curvature.  And for many minimal surfaces 
with finite total curvature the index has been found 
or has known bounds (for example, see \cite{FC}, \cite{N}, 
\cite{MR}, \cite{T}, \cite{Cho}).  

However, less is known about the index of nonminimal CMC surfaces.  
It is known that 
the surface is stable if and only if it is a round sphere \cite{BC} 
(analogous to the do Carmo-Peng result).  
It is also known that CMC surfaces without boundary 
in $\bfR^3$ have finite index if and only if they are 
compact \cite{LR}, \cite{Si} (analogous to Fischer-Colbrie's result).  
However, other than the round sphere, there is no 
closed CMC surface with known index.  
(Unlike CMC surfaces in general, minimal surfaces have meromorphic 
Gauss maps when given conformal coordinates.  
Furthermore, for the index of nonminimal CMC surfaces we 
must consider a volume 
constraint that does not appear in the minimal case.  
This accounts for why more progress has been made in finding 
the index of minimal surfaces.)  

Our purpose here is to expand the collection of nonminimal 
CMC surfaces for which
we have specific estimates of the index (to include surfaces other 
than just round spheres).  We 
restrict ourselves to symmetric Wente tori, because they  
are the simplest known examples of nonspherical 
closed CMC surfaces in $\bfR^3$, and can be represented nicely 
\cite{A}, \cite{Wa}.  Hence they are the obvious 
candidates to consider in an initial investigation.  

In section 2, we briefly reiterate Walter's description of 
symmetric Wente tori, describe the Jacobi operator and the 
index problem, and discuss the 
eigenvalues and eigenfunctions of the Laplacian on a torus (which 
play an essential role in our discussion).  

In section 3, we use Courant's nodal domain theorem to find 
initial lower bounds for the index, obtaining as a corollary that 
for every natural number $N$, there exist only finitely many 
symmetric Wente tori with index less than $N$.  
In section 4, we use upper and lower bounds of the Jacobi operator's 
potential function to determine upper and lower bounds for the index.  

In section 5, we use eigenfunctions of the 
Laplacian to create function spaces on which the Jacobi operator 
is negative definite, producing much sharper lower bounds 
for the index.  These bounds imply that 
every symmetric Wente torus has index at least 7.  
Furthermore, 
the two simplest Wente tori (the first two surfaces shown in 
Figure 1) have index at least $9$ and $8$, respectively.  

In section 6, we prove the correctness of 
an algorithm (a variation of the finite element method \cite{FS}) 
for computing the index 
exactly, up to a possible change by 1 from the volume constraint.  
In this proof, we essentially show that 
on a flat torus, the spectrum of a Schrodinger operator
$\triangle + V$ with $C^\infty$ potential function $V$ can be 
determined from the restrictions of the operator 
to function spaces 
spanned by a finite number of explicitly known eigenfunctions of 
the Laplacian $\triangle$.  

Finally, in section 7, we implement the algorithm 
to give numerical estimates for the index of $17$ different 
Wente tori (Table 3).  These numerical estimates show 
that the lower bounds of section 5 are quite sharp, and show that it is 
reasonable to conjecture that 
every symmetric Wente torus has index at least $9$.  

\begin{acknowledgements}
The author thanks S. Nayatani, H. Urakawa, K. Akutagawa, and M. 
Koiso for their helpful suggestions.  
\end{acknowledgements}

\section{Preliminaries}

\subsection{Walter's representation [Wa]} 

Consider a smooth conformal immersion
${\cal X}: {\cal M} \rightarrow \bfR^3$, where 
${\cal M}$ is a compact 
2-dimensional torus with the induced metric.  If 
$(x,y)$ are isothermal coordinates on 
$\cal M$, then the
first and second fundamental forms,
Gaussian curvature, mean curvature, 
and Hopf differential are 
\[ ds^2 = E(dx^2 + dy^2)\; , \; \; \; 
    II = L dx^2 + 2M dx dy + N dy^2 \; , \]\[
  K = \frac{LN-M^2}{E^2} \; , \; \; \; 
   H = \frac{L+N}{2 E} \; , \; \; \; 
   \Phi = \frac{1}{2} (L-N) - iM \; . \] 
The Gauss and Codazzi equations imply that 
$\Phi$ is holomorphic with respect
to $z=x+iy$ when $H$ is constant.  

Since $\cal M$ is a torus, we may 
assume that ${\cal M} = \bfC / \Gamma$, where 
$\Gamma$ is a lattice in the plane $\bfC$.  Since 
$dz$ is a global 1-form on $\bfC / \Gamma$, and since 
we will assume $H$ is constant, we 
see that $\Phi dz^2$ is a constant multiple of $dz^2$.  

We define a function $F$ by \[ H E = e^F \; . \]
By a linear transformation of $\bfC$, we may arrange 
that $\Phi dz^2 = dz^2$, hence 
$M = 0$, $L = e^F+1$, and $N = e^F-1$.  
Thus $\cal X$ has no umbilic points, and 
$(x,y)$ become curvature line coordinates.  

Denoting the oriented unit normal by $\vec{N}$, we 
may compute that 
\[ {\cal X}_{xx} = \frac{1}{2} F_x {\cal X}_x - 
\frac{1}{2} F_y {\cal X}_y - (e^F + 1) \vec{N} \; , \; \; \; 
{\cal X}_{xy} = \frac{1}{2} F_y {\cal X}_x + 
\frac{1}{2} F_x {\cal X}_y \; , \]
\[ {\cal X}_{yy} = - \frac{1}{2} F_x {\cal X}_x + 
\frac{1}{2} F_y {\cal X}_y - (e^F - 1) \vec{N} \; , \; \; \; 
\vec{N}_{x} = H (1+e^{-F}) {\cal X}_x \; , \; \; \; 
\vec{N}_{y} = H (1-e^{-F}) {\cal X}_y \; , \]
\begin{equation}
\triangle F + 4 H \sinh{F} = 0 \; , 
\end{equation}
where $\triangle = \frac{\partial^2}{\partial x^2} + 
\frac{\partial^2}{\partial y^2}$.  Thus finding the 
immersion ${\cal X}: {\bfC / \Gamma} \rightarrow \bfR^3$ is 
reduced to solving for $F$ in equation (2.1).  
In the case of symmetric Wente tori, 
Walter determined $F$ and the immersion $\cal X$ explicitly.  
In the process of doing so, he proved the following lemmas.  

\begin{lemma}
The set of all symmetric Wente tori are in a one-to-one 
correspondence with the set of reduced 
fractions $\ell/n \in (1,2)$.  
\end{lemma}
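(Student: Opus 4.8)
The plan is to reduce equation~(2.1) to an ordinary differential equation, integrate it via elliptic functions, reconstruct the immersion, and recast the requirement that it close up into a torus as a rationality condition on a period integral. A symmetric Wente torus carries a whole one-parameter family of planar principal curves, which --- the principal curves being the coordinate curves in the normalization above --- is one of the two coordinate families. Computing the torsion of such a curve from the structure equations, one finds it is planar exactly when $F$ does not depend on the complementary variable; writing $x$ for the variable on which $F$ depends, we get $F=F(x)$, and (2.1) becomes the pendulum equation $F''+4H\sinh F=0$. Since $\cosh F$ is unbounded, every non-constant solution is bounded and periodic: the first integral is $\frac{1}{2}(F')^{2}+4H\cosh F=c$ with $c>4H$, and $F$ oscillates symmetrically over $[-F_{\max}(c),F_{\max}(c)]$, $4H\cosh F_{\max}(c)=c$, with least period $2b(c)$; inverting the energy relation writes $e^{F}$, and hence the whole immersion, in terms of Jacobi elliptic functions of $x$ with modulus depending on $c$. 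Up to a homothety and a translation in $x$, the parameter is $c\in(4H,\infty)$.

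Feeding $F=F(x)$ into the structure equations for ${\cal X}_{xx},{\cal X}_{xy},{\cal X}_{yy},\vec{N}_{x},\vec{N}_{y}$ and computing, one finds that for each fixed $x$ the curve $y\mapsto{\cal X}(x,y)$ has constant ambient curvature and vanishing torsion, hence is a circle --- equivalently, the perpendicular principal curves lie on spheres, which is Walter's observation --- so the surface is a one-parameter family of circles indexed by $x$. In the $y$-direction the circles close up automatically; the content is the $x$-direction. Since $F$ is non-constant it has no pair of incommensurable periods, and a short argument (using that a Wente torus is not a surface of revolution) then shows the period lattice contains the pure $x$-translation by $\ell\cdot 2b(c)$ for some positive integer $\ell$; consequently the rigid motion $M$ by which $x\mapsto x+2b(c)$ acts on the immersion satisfies $M^{\ell}={\rm id}$, so $M$ is a rotation of finite order, about a fixed axis $A$, through an angle $\Theta(c)$. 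One computes $\Theta(c)$ explicitly as a period integral $\Theta(c)=\int_{0}^{2b(c)}g(F(x))\,dx$ from the frame equations. Thus the torus closes up after $F$ runs through $\ell$ periods while the surface turns $n$ times about $A$, so that $\Theta(c)/2\pi=n/\ell$ in lowest terms and $\ell/n=2\pi/\Theta(c)$; being the angle of the canonical rotational monodromy, $\Theta(c)$, hence $\ell/n$, is an invariant of the surface. Conversely a solution $F_{c}$ gives a torus exactly when $\Theta(c)/2\pi$ is rational.

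It now suffices to show that $c\mapsto\Theta(c)$ is continuous and strictly monotone on $(4H,\infty)$, with image exactly the set of $\Theta$ for which $2\pi/\Theta\in(1,2)$, i.e.\ $\Theta\in(\pi,2\pi)$. Granting this, the symmetric Wente tori are precisely the surfaces attached to those $c$ with $\Theta(c)\in 2\pi\bfQ$, and $c\mapsto\ell/n$ is a bijection onto the reduced fractions in $(1,2)$ --- surjectivity and injectivity both coming from $\Theta$ being a continuous strictly monotone bijection $(4H,\infty)\to(\pi,2\pi)$, with distinct tori non-congruent because $\ell/n$ is an invariant. The open endpoints are genuine degenerations: at one end $F_{\max}(c)\to 0$ and the surface limits to a round cylinder, at the other the profile curve runs into the axis $A$, and neither limit is an immersed genus-one surface.

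The crux is precisely this last step. Evaluating the two limiting values of $\Theta$ is a standard if delicate estimate on complete elliptic integrals, but the global strict monotonicity --- which is what promotes the construction from a surjection onto $(1,2)\cap\bfQ$ to a bijection --- is where the special form of the potential $\sinh F$ must really be used, typically via a well-chosen substitution in the period integral or by differentiating under the integral sign and tracking the sign of the resulting integrand.
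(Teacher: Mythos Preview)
Your argument has a fundamental error at its first step. You claim that a coordinate family of principal curves is planar ``exactly when $F$ does not depend on the complementary variable,'' and thereby reduce (2.1) to the pendulum ODE $F''+4H\sinh F=0$. But if $F=F(x)$ alone, the surface acquires a continuous one-parameter family of ambient isometries (translation in $y$ lifts to a rigid motion of $\bfR^3$), so it is a surface of revolution --- a Delaunay unduloid or nodoid --- and in particular is never a compact immersed torus. You yourself invoke later that ``a Wente torus is not a surface of revolution''; that observation is already incompatible with your ansatz $F=F(x)$.

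The genuine planarity condition is weaker. Working out the torsion of the $y$-curves from the structure equations gives a second-order constraint which, combined with the sinh-Gordon equation (2.1), forces only a \emph{separable} form for $F$: as recorded in Lemma~2.2, $F=4\,\mathrm{arctanh}\bigl(f(x)\,g(y)\bigr)$, with $f$ and $g$ both non-constant Jacobi elliptic functions. Equation (2.1) remains a PDE, and the moduli space is two-dimensional (the pair $(\theta,\bar\theta)$), not the one-dimensional pendulum family parametrized by your $c$. Correspondingly there are \emph{two} independent period problems --- a translational one along the rotation axis, solved by the universal value $\bar\theta\approx 65.35^\circ$, and a rotational one about that axis, solved by a unique $\theta$ for each $\ell/n$. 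Your single monodromy angle $\Theta(c)$ captures at best the second of these, and only after the first has already been imposed along a curve in the $(\theta,\bar\theta)$-plane rather than on a family of ODE solutions. The paper does not reprove this lemma but quotes it from Walter \cite{Wa} (building on Abresch \cite{A}); the route there goes through the separable ansatz, not through $F=F(x)$.
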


\begin{figure}
\centerline{
        \hbox{
		\psfig{figure=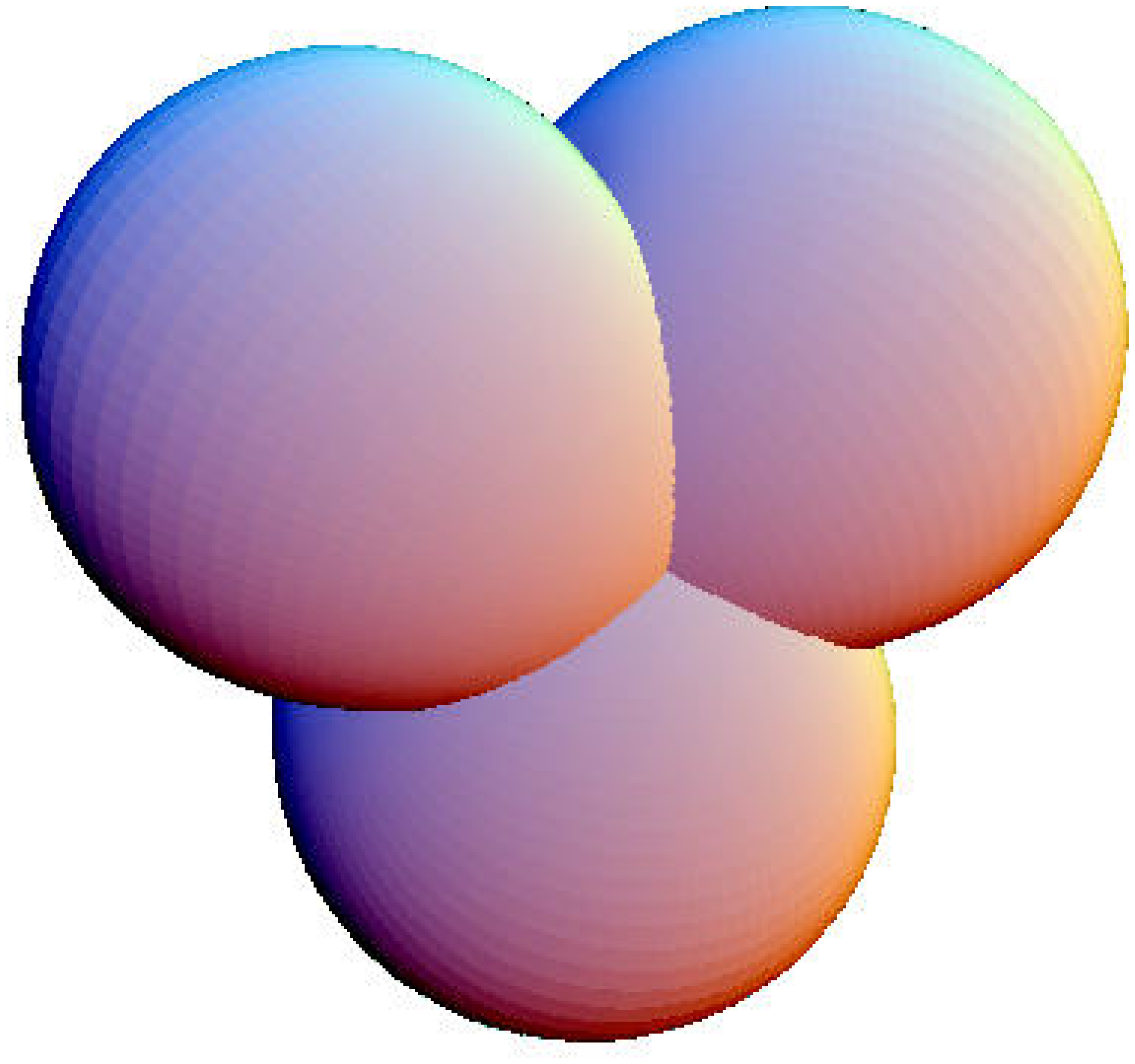,width=1.7in}
		\hspace{0.2cm}
		\psfig{figure=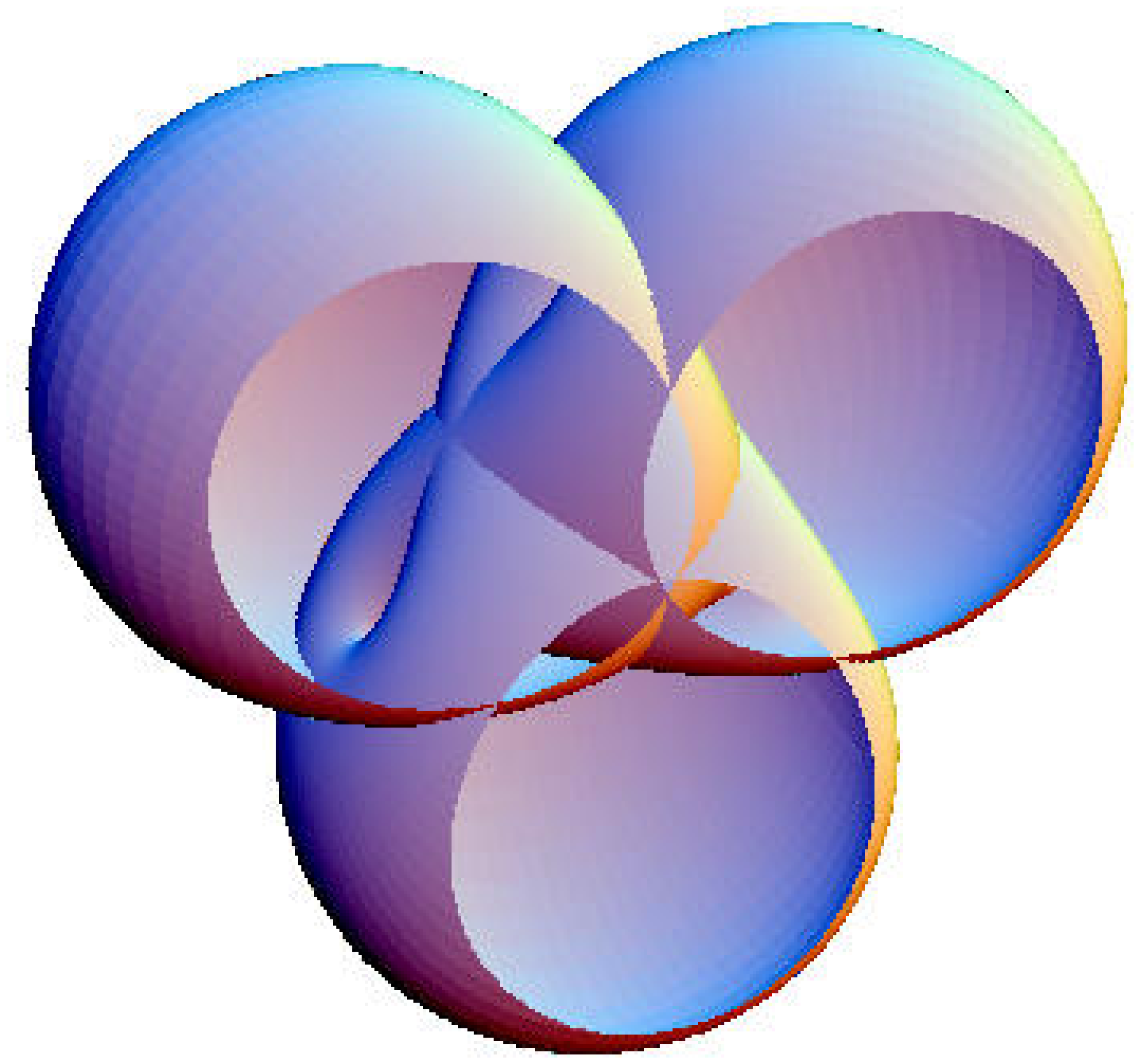,width=1.7in}
		\hspace{0.2cm}
		\psfig{figure=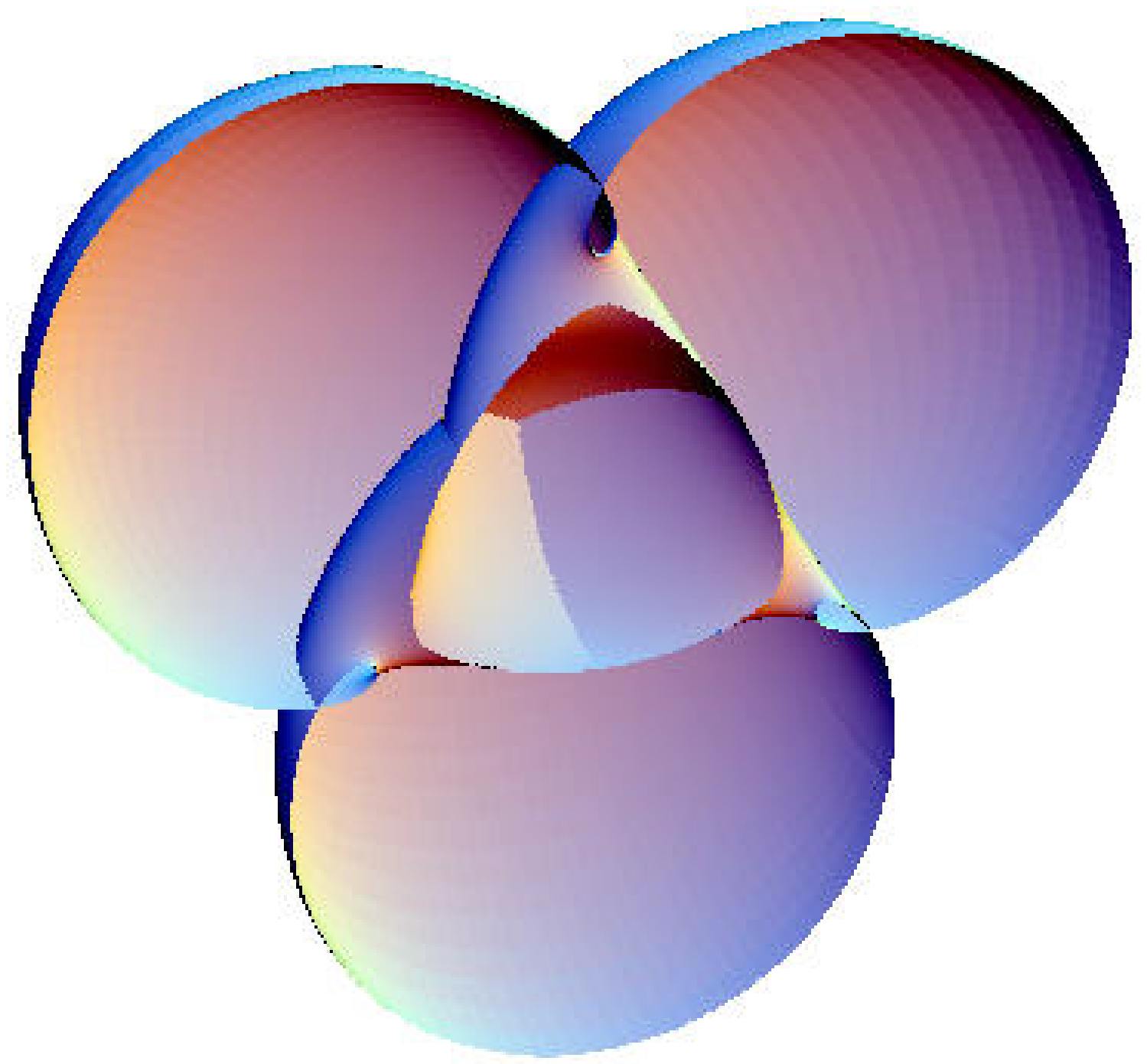,width=1.7in}
	}}
\centerline{
        \hbox{
		\psfig{figure=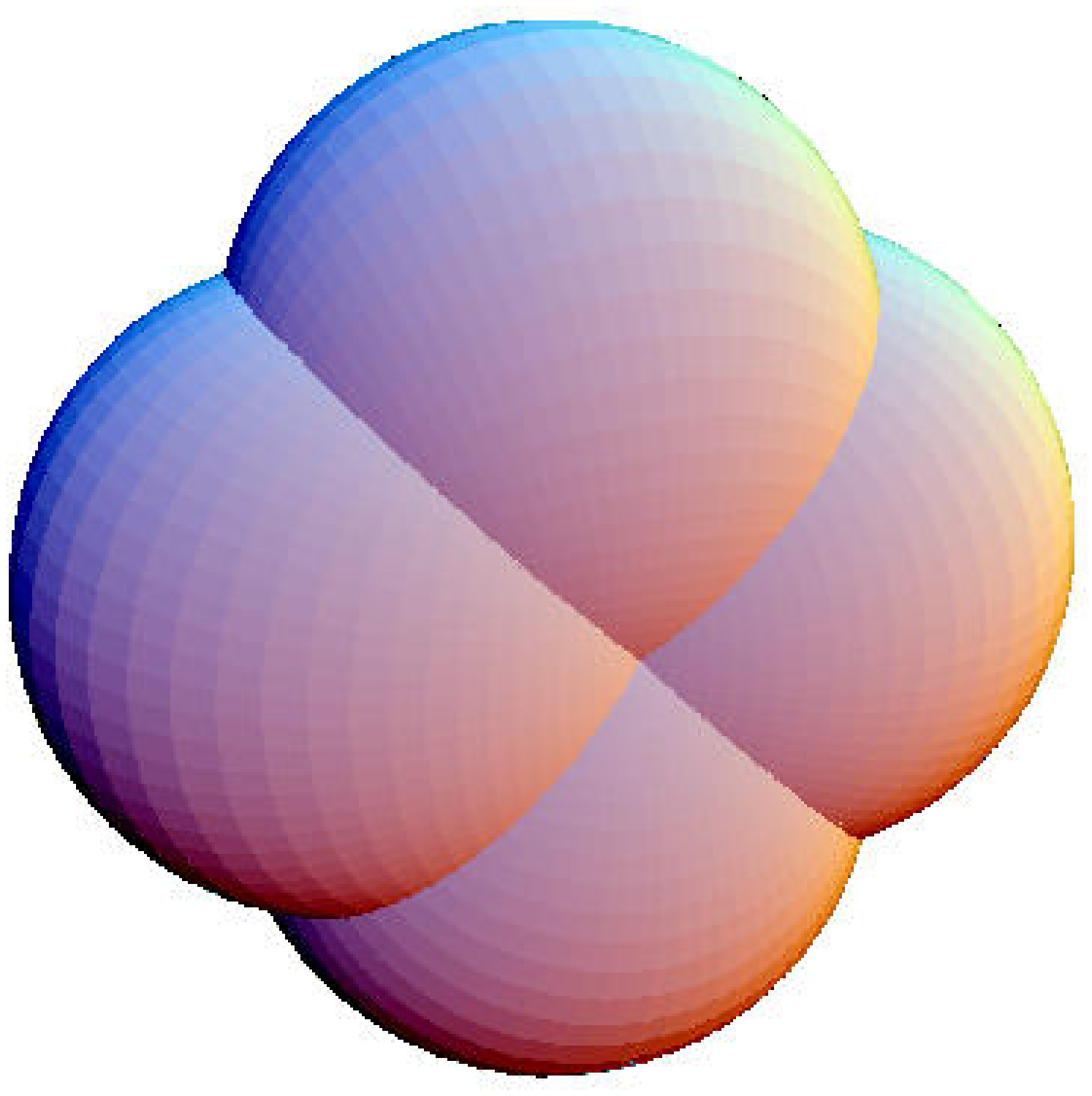,width=1.7in}
		\hspace{0.2cm}
		\psfig{figure=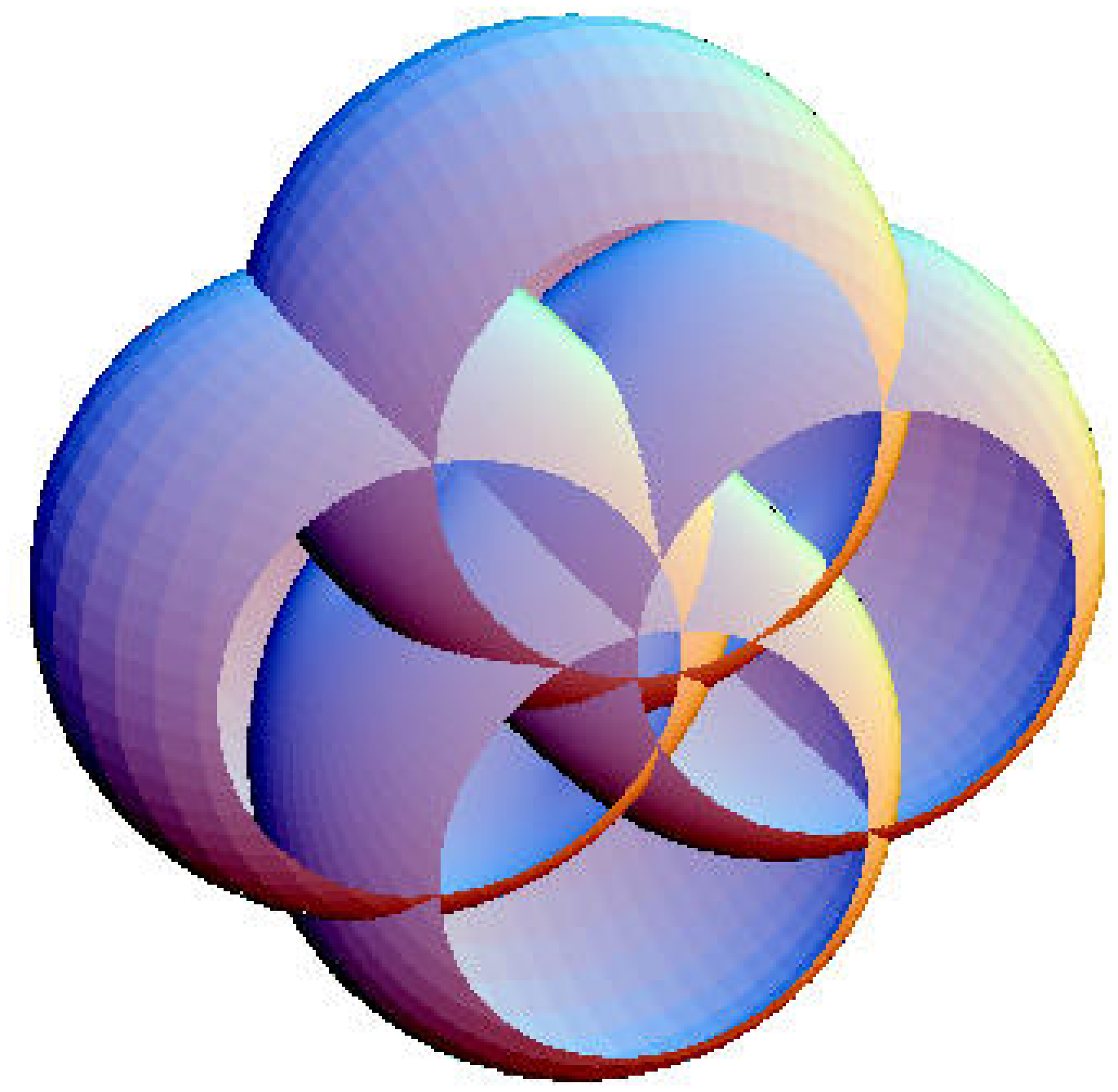,width=1.7in}
		\hspace{0.2cm}
		\psfig{figure=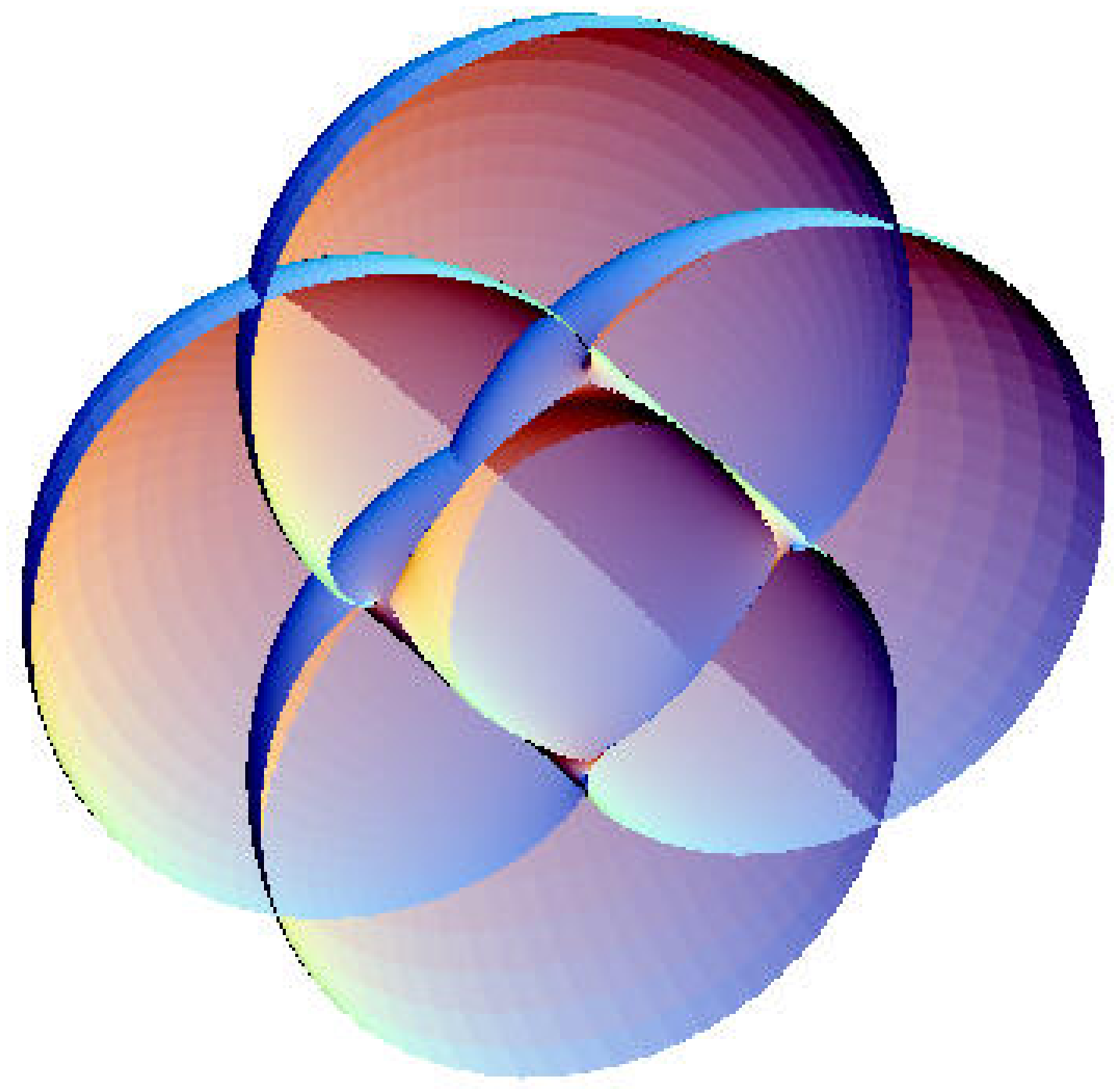,width=1.7in}
	}}
\centerline{
        \hbox{
		\psfig{figure=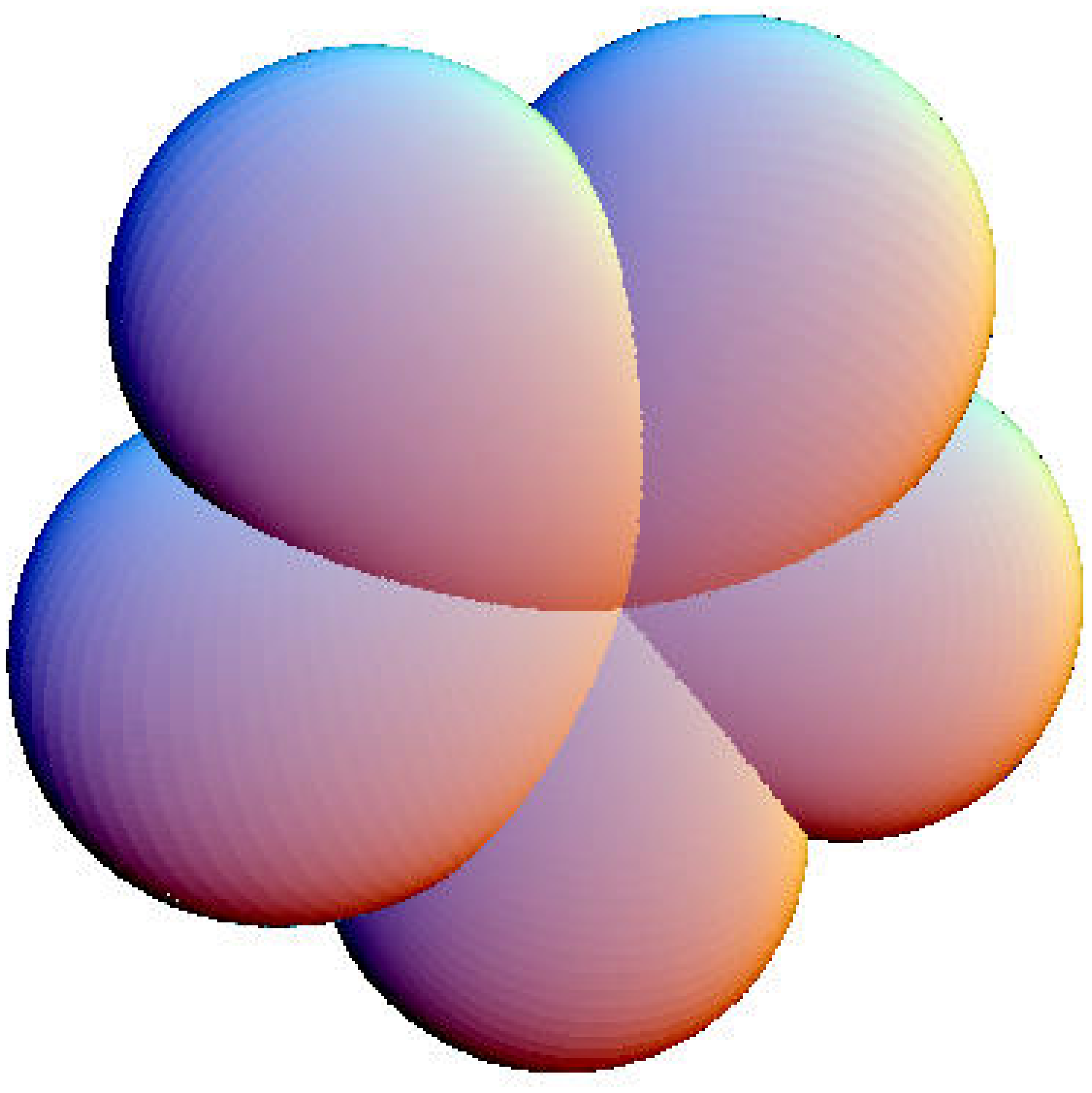,width=1.7in}
		\hspace{0.2cm}
		\psfig{figure=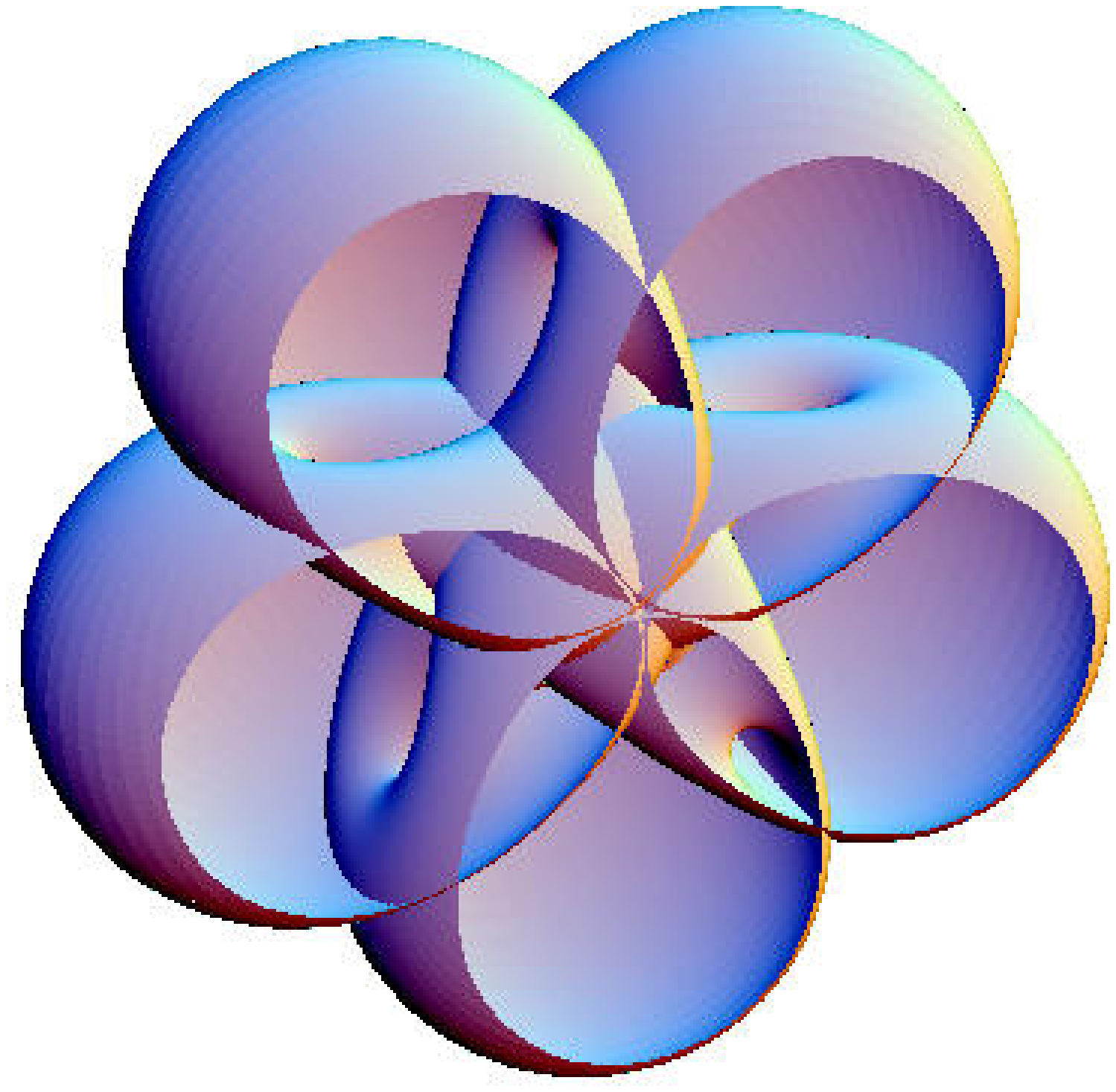,width=1.7in}
		\hspace{0.2cm}
		\psfig{figure=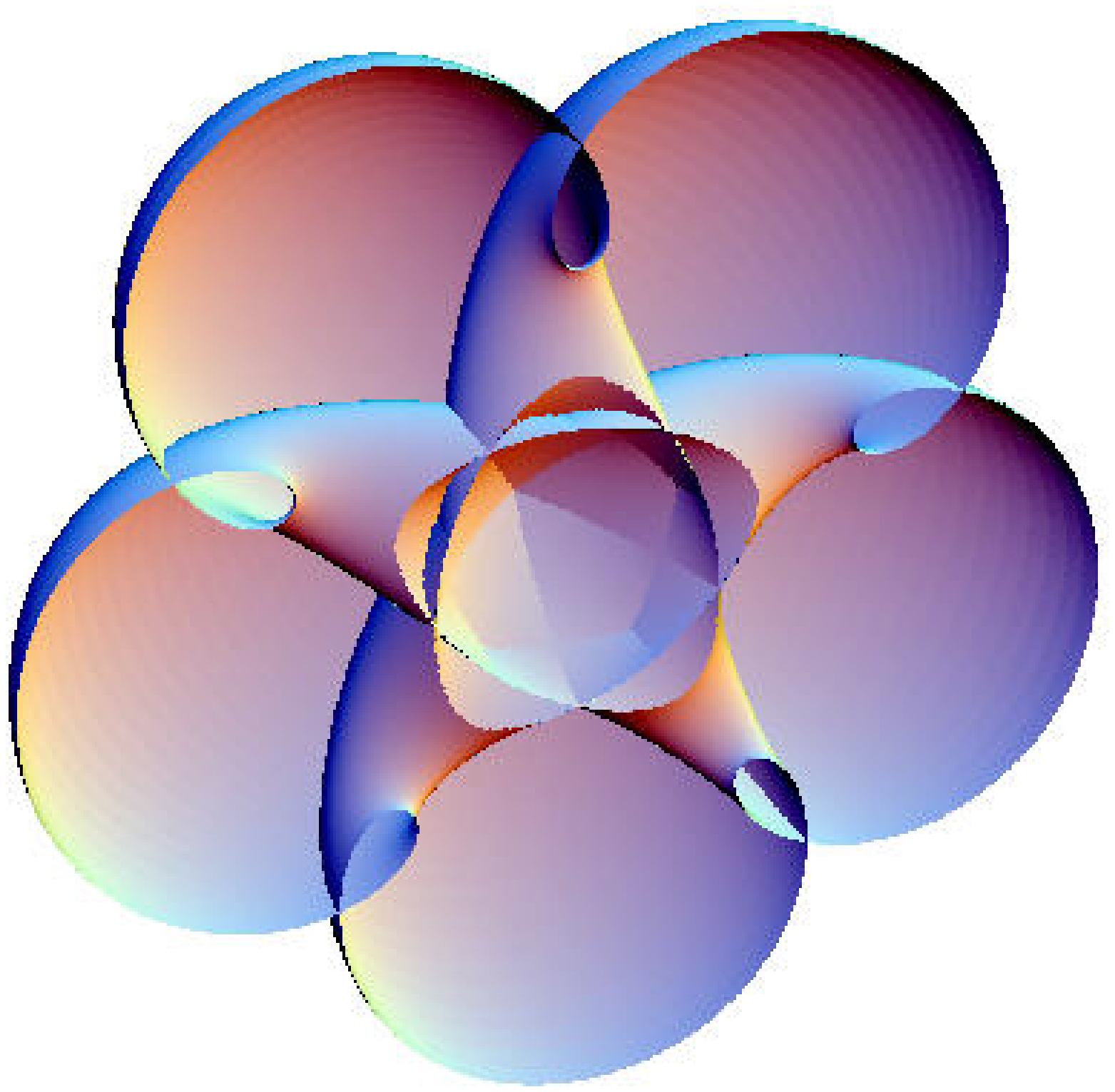,width=1.7in}
	}}
\caption{The surfaces ${\cal W}_{4/3}$, ${\cal W}_{3/2}$, 
and ${\cal W}_{6/5}$, with cut-aways to expose the inner parts.  
(Graphics by Katsunori Sato of Tokyo 
Institute of Technology.)}
\end{figure}

For each $\ell/n$, we call the corresponding symmetric 
Wente torus ${\cal W}_{\ell/n}$.  Each ${\cal W}_{\ell/n}$ has 
either one or two planar geodesic loops in the 
central symmetry plane: two loops 
if $\ell$ is odd, and one loop if $\ell$ is even.  Each 
loop can be partitioned into $2 n$ congruent curve segments, 
and $\ell$ is the total winding order of the Gauss map along each 
loop.  (The central planar geodesic loops are contained in the 
boundaries of the right-hand side figures in Figure 1.)  

\begin{lemma}
The function $F$ for the surface 
${\cal W}_{\ell/n}$ is 
\[ F = 4 \mbox{arctanh}(f(x) g(y)) \; , \mbox{ with } \]
\[ f(x) = \gamma \mbox{cn}_k(\alpha x) \; , \; \; \; 
g(y) = \bar{\gamma} \mbox{cn}_{\bar{k}}(\bar{\alpha} y) 
\; , \]
where $\mbox{cn}_k$ (resp. $\mbox{cn}_{\bar{k}}$) is 
the amplitudinus cosinus of 
Jacobi with modulus $k$ (resp. $\bar{k}$), and 
\[ k = \sin{\theta}, \bar{k} = \sin{\bar{\theta}} \; , 
\mbox{ for } \theta,\bar{\theta} \in (0,\pi/2) \; , 
\mbox{ and } \theta + \bar{\theta} < \frac{\pi}{2} \; , 
\]\[ \gamma = \sqrt{\tan{\theta}} > 0 \; , \; \; \; 
\bar{\gamma} = \sqrt{\tan{\bar{\theta}}} > 0 \; , \; \; \; 
\alpha = \sqrt{4 H \frac{\sin{2 \bar{\theta}}}
{\sin{2(\theta+\bar{\theta})}}} \; , \; \; \; 
\bar{\alpha} = \sqrt{4 H \frac{\sin{2 \theta}}
{\sin{2(\theta+\bar{\theta})}}} \; . \] 
\end{lemma}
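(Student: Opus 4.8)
The statement to prove (Lemma 2.2) gives an explicit formula for the function $F$ solving the sinh-Gordon equation $\triangle F + 4H\sinh F = 0$ on the torus $\bfC/\Gamma$, in the form $F = 4\,\mathrm{arctanh}(f(x)g(y))$ with $f$ and $g$ built from Jacobi cosine amplitude functions.

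**My plan for the proof.** The core strategy is a separation-of-variables ansatz followed by verification.

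1. Start by assuming $F$ has the form $F = 4\,\mathrm{arctanh}(f(x)g(y))$ — this is motivated by Walter's observation that one family of principal curves is planar (giving $x$-dependence) and the perpendicular family lies in spheres (giving $y$-dependence).

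2. Substitute into $\triangle F + 4H\sinh F = 0$. Using $\tanh(F/4) = fg$, one computes $\sinh F$ in terms of $fg$ via the hyperbolic multiple-angle formulas, and computes $F_{xx}, F_{yy}$ via the chain rule (derivatives of $\mathrm{arctanh}$). After clearing the common denominator $(1-f^2g^2)^2$, the equation becomes a polynomial identity in $f, g, f', f'', g', g''$.

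3. The key step: this polynomial identity should separate. I expect terms grouping so that $f''$ relates to a cubic in $f$ (the ODE for $\mathrm{cn}_k$, namely $(f')^2 = $ quadratic in $f^2$, equivalently $f'' = af + bf^3$), and similarly for $g$. This forces $f = \gamma\,\mathrm{cn}_k(\alpha x)$ and $g = \bar\gamma\,\mathrm{cn}_{\bar k}(\bar\alpha y)$ for appropriate constants.

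4. Match coefficients to pin down $\alpha, \bar\alpha, \gamma, \bar\gamma, k, \bar k$ in terms of $H, \theta, \bar\theta$, recovering the stated formulas and the constraint $\theta + \bar\theta < \pi/2$ (needed for the lattice to close up / for $|fg| < 1$ so $\mathrm{arctanh}$ is defined).

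5. Verify the doubly-periodic condition: $\mathrm{cn}_k$ has real period $4K(k)$, so the fundamental domain of $\Gamma$ is a rectangle of appropriate dimensions — and check the torus closes up consistently with Lemma 2.1's classification by $\ell/n$.

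**The main obstacle.** The hardest part is step 3 — showing the substituted equation genuinely separates. A priori the cleared equation is a bilinear polynomial in functions of $x$ and functions of $y$, and it is not obvious that it can be reorganized into "(function of $x$) $=$ (function of $y$) $=$ const." The trick is to use the *first-order* ODE satisfied by $\mathrm{cn}_k$ to eliminate $(f')^2$ and $(g')^2$ (replacing them with quadratics in $f^2, g^2$), and simultaneously the *second-order* ODE to eliminate $f'', g''$. Only after both substitutions do the cross-terms $f'g'$ drop out and the residual identity factors into separate constraints. Getting the bookkeeping right on which Jacobi identities to invoke, and checking that the resulting system of equations for the six constants is consistent (not overdetermined), is the delicate computational heart of the argument. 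Since this is Walter's lemma, the cleanest exposition is probably to *state* the ansatz, *substitute* the claimed $f, g$, and *verify* the identity directly using the standard relations $\mathrm{cn}' = -\mathrm{sn}\,\mathrm{dn}$, $\mathrm{sn}^2 + \mathrm{cn}^2 = 1$, $\mathrm{dn}^2 = 1 - k^2\mathrm{sn}^2$ — turning the "discovery" proof into a one-line check, with the derivation of the constants relegated to a remark or citation of \cite{Wa}.
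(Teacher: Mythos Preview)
The paper does not prove this lemma at all: it is stated in the preliminaries (Section~2.1, ``Walter's representation [Wa]'') as a result quoted from Walter's paper, with the surrounding text saying only that ``Walter determined $F$ and the immersion ${\cal X}$ explicitly'' and ``in the process of doing so, he proved the following lemmas.'' So there is no proof in the paper to compare against; the author simply imports the formula from \cite{Wa}.

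Your verification strategy is the natural way to supply a proof where the paper gives none, and it is essentially how such separable sinh-Gordon solutions are derived in the literature (Walter, Abresch). Your plan is sound, and your final remark is exactly right: the cleanest write-up is to take the stated $f,g$ as given, compute $F_{xx}+F_{yy}$ using the chain rule and the standard Jacobi identities ${\rm cn}'=-{\rm sn}\,{\rm dn}$, ${\rm sn}^2+{\rm cn}^2=1$, ${\rm dn}^2+k^2{\rm sn}^2=1$, and check that it equals $-4H\sinh F$. One small caution on step~5: the periodicity of $f$ and $g$ alone does \emph{not} force the immersion ${\cal X}$ to close up on a torus---that requires solving the separate translational and rotational period problems for the surface in $\bfR^3$, which is where the specific value $\bar\theta\approx 65.35^\circ$ and the discrete family $\ell/n$ come from. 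Those period conditions are genuinely outside the scope of Lemma~2.2 (they belong to Lemmas~2.1 and~2.3), so your proof should stop at verifying that $F$ solves the PDE and is doubly periodic as a function on the plane, and defer the closing conditions to the other lemmas rather than trying to fold them in.
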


There are two period problems for the immersion.  One is a 
translational period problem in the direction of the 
rotation axis of the surface, and the other is a rotational 
period problem around the rotation axis.  
Using elliptic function theory, 
Walter determined that for all $\ell/n$, to solve the 
translational period problem of the surface, one needs 
$\bar{\theta} = 65.354955354^\circ$.  To then solve the 
rotational period problem, there is a unique 
choice of $\theta \in (0^\circ,24.645044646^\circ)$.  
The value of $\theta$ (but not $\bar{\theta}$) 
will depend on $\ell/n$ (see Table 2).  

Let $x_{\ell n}$ (resp. $y_{\ell n}$) be the length of the 
period of $f(x)$ (resp. $g(y)$).  

\begin{lemma}
${\cal X}: \bfC/\Gamma \to {\cal W}_{\ell/n}$ is a conformal 
diffeomorphism , where 
\[ \Gamma = 
\mbox{span}_{\bfZ} \{(n x_{\ell n},0),(0,y_{\ell n})\}
\; \; \; \; \mbox{ when } 
\ell \mbox{ is odd, and} 
\] 
\[ \Gamma = 
\mbox{span}_{\bfZ} \{(n x_{\ell n}/2,y_{\ell n}/2),
(0,y_{\ell n})\} 
\; \; \; \; \mbox{ when } 
\ell \mbox{ is even.} 
\] 
The curves $\{[x_0,y] \; | \, x_0 = constant \}$ are 
mapped by $\cal X$ to planar curvature lines of 
${\cal W}_{\ell/n}$.  
\end{lemma}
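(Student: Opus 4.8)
The plan is to reproduce Walter's construction in three stages: recover the immersion ${\cal X}$ from $F$ on the universal cover $\bfR^{2}$; identify its period lattice with $\Gamma$; and verify that the curves $x=\mbox{const}$ are planar lines of curvature. For the first stage, observe that with $E=e^{F}/H$, $L=e^{F}+1$, $M=0$, $N=e^{F}-1$ the Gauss--Weingarten relations displayed above form a linear first-order system for the frame $\{{\cal X}_{x},{\cal X}_{y},\vec{N}\}$ whose single integrability condition is exactly (2.1); so one first checks that the $F$ of Lemma 2.2 solves $\triangle F+4H\sinh F=0$. This is the computation behind Lemma 2.2: writing $p=f(x)$, $q=g(y)$ one has $e^{F/2}=(1+pq)/(1-pq)$, hence $\sinh F=4pq(1+p^{2}q^{2})/(1-p^{2}q^{2})^{2}$, and using the Jacobi cosine identities $(f')^{2}=\alpha^{2}\gamma^{-2}(\gamma^{2}-f^{2})\big(\gamma^{2}(1-k^{2})+k^{2}f^{2}\big)$ and $f''=\alpha^{2}\big((2k^{2}-1)f-2k^{2}\gamma^{-2}f^{3}\big)$ (and their analogues for $g$ with $\bar{k},\bar{\gamma},\bar{\alpha}$), the explicit constants of Lemma 2.2 make $\triangle F+4H\sinh F$ cancel term by term. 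Since $|f|\le\gamma=\sqrt{\tan\theta}$, $|g|\le\bar{\gamma}=\sqrt{\tan\bar{\theta}}$ and $\tan\theta\tan\bar{\theta}<1$ (equivalently $\cos(\theta+\bar{\theta})>0$, i.e.\ $\theta+\bar{\theta}<\pi/2$), one has $|fg|<1$ on all of $\bfR^{2}$; hence $F$ is real-analytic there and the frame system integrates to an immersion ${\cal X}:\bfR^{2}\to\bfR^{3}$, unique up to a rigid motion, with induced metric $E(dx^{2}+dy^{2})$.

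For the second stage, recall that $\mbox{cn}_{k}$ changes sign under a shift by half its minimal period, so $f$ and $g$ have minimal periods $x_{\ell n}$ and $y_{\ell n}$ with $f(x+x_{\ell n}/2)=-f(x)$ and $g(y+y_{\ell n}/2)=-g(y)$; hence $F$ is invariant under $\Gamma_{0}:=\mbox{span}_{\bfZ}\{(x_{\ell n}/2,y_{\ell n}/2),(0,y_{\ell n})\}$ and under no strictly larger lattice. For $v\in\Gamma_{0}$, translation by $v$ preserves $E,L,M,N$, so ${\cal X}(\cdot+v)=R_{v}\circ{\cal X}$ for some $R_{v}\in\mbox{Isom}^{+}(\bfR^{3})$; $v\mapsto R_{v}$ is a homomorphism, and ${\cal X}$ descends to $\bfR^{2}/\Lambda$ with $\Lambda=\{v:R_{v}=\mbox{id}\}\subseteq\Gamma_{0}$. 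One checks $\Gamma\subseteq\Gamma_{0}$ in both parities --- for $\ell$ even using that $\gcd(\ell,n)=1$ forces $n$ odd. Walter's resolution of the two period problems (the value $\bar{\theta}=65.354\ldots^{\circ}$ killing the translational monodromy along the axis, then the value of $\theta$ killing the rotational monodromy) says exactly that $R_{v}=\mbox{id}$ for the two generators of $\Gamma$, so $\Gamma\subseteq\Lambda$; and Walter's description of ${\cal W}_{\ell/n}$ as assembled from congruent pieces by screw motions about the axis (the structure recorded after Lemma 2.1) shows $R$ is nontrivial on $\Gamma_{0}/\Gamma$, hence $\Lambda=\Gamma$. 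Since $\Gamma$ is a rank-$2$ lattice, ${\cal X}$ therefore induces a conformal diffeomorphism $\bfC/\Gamma\to{\cal W}_{\ell/n}$.

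For the third stage, since $M=0$ the coordinates $(x,y)$ are curvature-line coordinates, so $c(y):={\cal X}(x_{0},y)$ is a line of curvature; to see it is planar I would exhibit a vector constant along $c$, namely
\[
v(x_{0}):=-2H\,f(x_{0})\,e^{-F/2}\,{\cal X}_{x}+f'(x_{0})\,\vec{N}.
\]
From ${\cal X}_{xy}=\frac{1}{2}F_{y}{\cal X}_{x}+\frac{1}{2}F_{x}{\cal X}_{y}$ and $\vec{N}_{y}=H(1-e^{-F}){\cal X}_{y}$ one gets $\partial_{y}v=H\big(f'(x_{0})(1-e^{-F})-f(x_{0})e^{-F/2}F_{x}\big){\cal X}_{y}$, and the bracket vanishes identically: the separated form $F=4\,\mbox{arctanh}(f(x_{0})g(y))$ gives $1-e^{-F}=4f(x_{0})g/(1+f(x_{0})g)^{2}$ and $e^{-F/2}F_{x}=4f'(x_{0})g/(1+f(x_{0})g)^{2}$. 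Thus $\partial_{y}v\equiv0$, and $v\neq0$ because $\mbox{cn}_{k}$ and its derivative have no common zero. As ${\cal X}_{y}$ is orthogonal to both ${\cal X}_{x}$ and $\vec{N}$, we get $\frac{d}{dy}\langle c(y),v(x_{0})\rangle=\langle{\cal X}_{y},v(x_{0})\rangle=0$, so $c$ lies in the affine plane through $c(0)$ orthogonal to $v(x_{0})$. (One then recovers Walter's companion observation that the orthogonal curves $y=\mbox{const}$ lie in spheres, though that is not needed here.)

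The step I am really handing off rather than proving is the identification $\Lambda=\Gamma$: that the fundamental cell does not close up prematurely, and that the monodromy around the surviving generators is a genuine nontrivial screw motion about the axis, is precisely the content of Walter's elliptic-function period analysis, and is the main obstacle to making the argument self-contained. The regularity and the planarity, by contrast, become elementary once $F$ is put in the separated form of Lemma 2.2.
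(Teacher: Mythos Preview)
The paper does not prove this lemma at all: Lemmas 2.1--2.3 are stated in the subsection ``Walter's representation [Wa]'' with the preamble ``he proved the following lemmas,'' and the only commentary is a pointer to the introduction of \cite{A} for why the lattice differs with the parity of $\ell$. So there is no paper proof to compare against; your write-up goes considerably beyond what the paper itself supplies.

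That said, your sketch is a faithful outline of Walter's argument and the computations you do carry out are correct. The planarity step is clean: the identity $\partial_y\big(e^{-F/2}{\cal X}_x\big)=\tfrac12 e^{-F/2}F_x\,{\cal X}_y$ together with $\vec N_y=H(1-e^{-F}){\cal X}_y$ reduces $\partial_y v$ to a scalar multiple of ${\cal X}_y$, and your evaluation of $1-e^{-F}$ and $e^{-F/2}F_x$ from $e^{F/2}=(1+fg)/(1-fg)$ makes the bracket vanish; the non-vanishing of $v(x_0)$ from the disjointness of zeros of $\mbox{cn}_k$ and $\mbox{cn}_k'$ is also correct. Your identification of the maximal $F$-invariance lattice $\Gamma_0$ and the check $\Gamma\subseteq\Gamma_0$ (using that $n$ is odd when $\ell$ is even) are right. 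You are honest that the equality $\Lambda=\Gamma$ --- i.e.\ that the monodromy is trivial on $\Gamma$ and nontrivial on the intermediate cosets --- is exactly Walter's elliptic-integral period computation, and that is indeed the substantive content one cannot avoid citing. One small clarification worth adding: ``conformal diffeomorphism'' here is onto the abstract immersed torus ${\cal W}_{\ell/n}$, not onto a subset of $\bfR^3$, since Wente tori are not embedded; your monodromy framing already encodes this, but the phrasing ``induces a conformal diffeomorphism'' could be read the wrong way.
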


The reason why $\Gamma$ is different for 
$\ell$ odd or even is best explained in the 
introduction of \cite{A}.  (However, the 
notation "$m/n$" in \cite{A} is only half the value 
of the notation "$\ell/n$" used here and in \cite{Wa}.  
One can easily check that the characterization of 
$\Gamma$ depends on the denominator's parity in 
\cite{A}'s notation, and depends on the 
numerator's parity in \cite{Wa}'s notation.)  

$\cal X$ maps the domain 
$\{ (x,y) \; | \; 0 \leq x \leq x_{\ell n}/2, 0 \leq y 
\leq y_{\ell n}/2 \}$ to a fundamental 
piece of ${\cal W}_{\ell/n}$, 
bounded by four planar geodesics.  The entire 
surface is composed of pieces 
congruent to the fundamental piece, in the sense 
that if one continues the fundamental piece by 
reflecting across planes containing planar 
geodesics, one arrives at the entire ${\cal W}_{\ell/n}$.  
When $\ell$ is odd (resp. $\ell$ is even), 
${\cal W}_{\ell/n}$ consists of the 
union of $4 n$ (resp. $2 n$) fundamental pieces.  

\subsection{Index and the associated eigenvalue problem}

Let $\vec{v}_t$ be the variation vector field 
at time $t$ of a $C^\infty$ variation $M(t) \subset 
\bfR^3$ of the surface ${\cal W}_{\ell/n}$ so that 
$M(0) = {\cal W}_{\ell/n}$. Let 
$A(t)$ be the induced area of $M(t)$, and let 
\[ u_t := \langle \vec{v}_t, \vec{N} \rangle_{\bfR^3} \; , 
\; \; \; u := u_0 \; . \]  
It is natural to consider only those variations for 
which $\int_{{\bfC}/\Gamma} u_t dA_t = 0$ ($dA_t$ is the 
induced area form on $M(t)$, $dA := dA_0$) for all $t$ close 
to $0$, corresponding 
to the fact that the volume inside a "soap bubble" is preserved by 
any physically natural variation of the bubble.  Such a variation 
is {\em volume preserving} \cite{BC}.  So, the first variation 
formula implies that for volume preserving variations, 
\[ A^{\prime} (0) = H \int_{{\bfC}/\Gamma} u dA = 0 \; . \]

The second variation formula for volume preserving 
variations (\cite{BC}) is 
\[ A^{\prime \prime} (0) = 
\int_{{\bfC}/\Gamma} |\nabla u|^2 + (2K-4H^2) u^2 dA = 
\int_{{\bfC}/\Gamma} u ((-\triangle + 2KE-4H^2E)(u)) dxdy 
\; , \mbox{ so} \]  
\begin{equation} A^{\prime \prime} (0) = 
\int_{\bfC / \Gamma} u {\cal L}(u) dxdy \; , \; \; \; 
\mbox{ where} \end{equation}
\[ {\cal L}(u) = - \triangle u - V \hspace{-0.03in} \cdot u 
\hspace{-0.045in} \; , \; \; \; V := 4H \cosh{F} \]
is the Jacobi operator with potential function $V$.  
(This operator is also computed in \cite{PS}, but with 
notation that differs by constant factors.)  

Note that in equation (2.2), we 
are integrating with respect to the flat metric on $\bfC / 
\Gamma$, not with respect to the metric induced by 
the immersion.  

\begin{definition}
The index Ind(${\cal W}_{\ell/n}$) of ${\cal W}_{\ell/n}$ is the 
maximum possible dimension 
of a subspace ${\cal V} \subset C^\infty(\bfC/\Gamma)$ such 
that $\int_{\bfC / \Gamma} u dA = 0$ and 
$\int_{\bfC / \Gamma} u{\cal L}u dxdy < 0$ for all nonzero 
$u \in {\cal V}$.  
\end{definition}

By Lemma 2.4 of \cite{BC}, for every $u$ satisfying 
$\int_{\bfC / \Gamma} u dA = 0$, there exists a volume preserving 
variation of ${\cal W}_{\ell/n}$ with variation vector field 
$u \vec{N}$ on ${\cal W}_{\ell/n}$.  Thus, loosely speaking, 
Ind(${\cal W}_{\ell/n}$) is the maximum dimension of a space of 
variation vector fields for 
area-decreasing volume-preserving smooth variations, that is, a 
space on which $A^\prime(0) = 0$ and 
$A^{\prime \prime}(0) < 0$ for all its nonzero 
variation vector fields.  

Let $L^2 = L^2(\bfC/\Gamma)$ denote the measurable functions $u$ on 
$\bfC / \Gamma$ (or equivalently on ${\cal W}_{\ell/n}$) satisfying 
$\int_{\bfC / \Gamma} u^2 dxdy < 
\infty$, and define the $L^2$ inner product $\langle u,v 
\rangle_{L^2} := \int_{\bfC / \Gamma} uv dxdy$.  
If a sequence of functions $u_i \in L^2$ converges strongly 
in the $L^2$ norm to a function $u$, we denote this 
by $u_i \to_{L^2} u$ as $i \to \infty$.  Note that we 
define this $L^2$ space with respect to the flat 
metric on $\bfC / \Gamma$, not the metric induced by the 
immersion.  The following theorem is well known (see, for example, 
\cite{U}):  

\begin{theorem}
The operator ${\cal L}=-\triangle - V$ on $\bfC / \Gamma$ has 
a discrete spectrum of eigenvalues 
\[ \beta_1 \leq \beta_2 \leq .... \nearrow +\infty \] (each considered 
with multiplicity 1), and has corresponding 
eigenfunctions \[v_1,v_2,... \; \in C^\infty(\bfC/\Gamma) \] 
which form an orthonormal basis for $L^2$.  
Furthermore, we have the following variational characterization for 
the eigenvalues:
\begin{equation} \beta_j = \inf_{{\cal V}_j} \left( \sup_{\phi \in 
{\cal V}_j, || \phi 
||_{L^2} = 1} \int_{\bfC / \Gamma} \phi {\cal L} \phi dxdy \right) \; , 
\end{equation}
where ${\cal V}_j$ runs through all $j$ dimensional subspaces of 
$C^\infty (\bfC/\Gamma)$.  
\end{theorem}

\begin{lemma}
If ${\cal L}$ has k negative eigenvalues, then 
Ind(${\cal W}_{\ell/n}$) is either $k$ or $k-1$.
\end{lemma}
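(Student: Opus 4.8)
The plan is to sandwich ${\rm Ind}({\cal W}_{\ell/n})$ between $k-1$ and $k$, using Theorem~2.1 together with the single elementary observation that the volume constraint $\int_{\bfC/\Gamma} u\,dA = 0$ defines a linear subspace of $C^\infty(\bfC/\Gamma)$ of codimension at most one (in fact exactly one, since $dA$ is a positive measure, so $u\mapsto\int_{\bfC/\Gamma} u\,dA$ is a nonzero functional).

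For the upper bound I would simply forget the volume constraint: any subspace ${\cal V}$ that is admissible in the sense of Definition~2.1 is, a fortiori, a subspace of $C^\infty(\bfC/\Gamma)$ on which the quadratic form $Q(u):=\int_{\bfC/\Gamma} u\,{\cal L}u\,dxdy$ is negative definite. Because ${\cal L}$ has exactly $k$ negative eigenvalues, $\beta_{k+1}\ge 0$. If some $(k+1)$-dimensional ${\cal V}\subset C^\infty(\bfC/\Gamma)$ were negative definite for $Q$, then $\sup\{Q(\phi):\phi\in{\cal V},\ \|\phi\|_{L^2}=1\}$ would be attained (the $L^2$-unit sphere of the finite-dimensional space ${\cal V}$ is compact and $Q$ is continuous) and strictly negative; but the variational characterization~(2.3) in Theorem~2.1 forces this supremum to be $\ge\beta_{k+1}\ge 0$, a contradiction. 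Hence no admissible subspace has dimension exceeding $k$.

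For the lower bound, let $E_-=\mbox{span}\{v_1,\ldots,v_k\}$ be the span of the eigenfunctions with negative eigenvalues. By Theorem~2.1 the $v_j$ lie in $C^\infty(\bfC/\Gamma)$ and are $L^2$-orthonormal, so a nonzero $u=\sum_{j=1}^{k} c_j v_j$ satisfies $Q(u)=\langle u,{\cal L}u\rangle_{L^2}=\sum_{j=1}^{k}\beta_j c_j^2<0$; that is, $Q$ is negative definite on the $k$-dimensional space $E_-$. The restriction to $E_-$ of the functional $u\mapsto\int_{\bfC/\Gamma} u\,dA$ has kernel of dimension at least $k-1$, and $Q$ remains negative definite on that kernel, which is therefore an admissible subspace. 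Combining the two estimates gives ${\rm Ind}({\cal W}_{\ell/n})\in\{k-1,k\}$.

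I do not expect a substantive obstacle here. Smoothness of the eigenfunctions, the $L^2$ spectral decomposition, and the min--max formula are all already packaged in Theorem~2.1, and the remaining content is linear algebra. The only points deserving a moment's care are that the min--max characterization must be applied over $C^\infty$ subspaces (which is exactly how it is stated) and that a compactness argument is needed to turn ``$\sup\ge 0$'' into ``there exists a unit $\phi$ with $Q(\phi)\ge 0$''; and that the ambiguity between $k$ and $k-1$ is genuine, since which one occurs depends on finer information about how the constraint functional $u\mapsto\int u\,dA$ meets the negative eigenspace than the eigenvalue count alone provides.
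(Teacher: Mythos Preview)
Your proof is correct and follows essentially the same approach as the paper: the upper bound comes from the min--max characterization (2.3) applied with $j=k+1$, and the lower bound comes from intersecting the span $\{v_1,\dots,v_k\}$ with the kernel of the linear functional $u\mapsto\int_{\bfC/\Gamma} u\,dA$. You supply slightly more detail (the compactness argument for the supremum, and the explicit diagonal computation $Q(u)=\sum\beta_jc_j^2$), but the structure is identical.
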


\begin{proof} 
Since $\beta_{k+1} \geq 0$, from characterization (2.3) with $j=k+1$, 
we see there is no $k+1$ dimensional 
subspace of $C^\infty(\bfC/\Gamma)$ on which 
$\int_{\bfC / \Gamma} u{\cal L}u dxdy < 0$ for all nonzero 
functions $u$ in the $k+1$ dimensional subspace.  Thus 
Ind(${\cal W}_{\ell/n}$)$\leq k$.  

Now let ${\cal V}_k=\mbox{span}\{v_1,...,v_k\}$.  Since 
$\beta_j < 0$ for all $j \leq k$, 
${\cal V}_k$ is a $k$ dimensional space satisfying 
$\int_{\bfC/\Gamma} u{\cal L}u dxdy < 0$ for all nonzero $u \in 
{\cal V}_k$.  Define a linear map ${\cal F}: {\cal V}_k \to \bfR$ by 
${\cal F}(u) = \int_{\bfC / \Gamma} u dA$.  The kernel Ker($\cal F$)$= 
\{u \in {\cal V}_k \, | \, \int_{\bfC / \Gamma} u dA = 0 \}$ is a 
subspace of ${\cal V}_k$ of dimension at least $k-1$.  Then, by 
definition, Ind(${\cal W}_{\ell/n}$)$\geq$dim(Ker($\cal F$))$\geq k-1$.  
\end{proof}

\begin{remark}
It also follows from Lemma 2.4's proof that 
if there exists a $j$ dimensional space ${\cal V}_j \subset 
C^\infty(\bfC/\Gamma)$ with 
$\int_{\bfC/\Gamma} u {\cal L}u dxdy < 0$ for all nonzero 
$u \in {\cal V}_j$, then Ind(${\cal W}_{\ell/n}$)$\geq j-1$.  
\end{remark}

We define the {\em nullity} Null(${\cal L}$) of 
${\cal L}$ to be the multiplicity 
of the eigenvalue $0$ of ${\cal L}$.  

\begin{lemma}
Null(${\cal L}$)$\geq 6$ for every ${\cal W}_{\ell/n}$.  
\end{lemma}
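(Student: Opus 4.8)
The plan is to exhibit six explicit elements of $\ker{\cal L}$ coming from the isometry group of $\bfR^3$ and then to verify that they are linearly independent; since $\mbox{Null}({\cal L})=\dim\ker{\cal L}$ (no constraint enters the nullity), this gives the bound. First, I would observe that the structure equations of Section 2.1 yield $\triangle\vec N=-V\vec N$ directly: differentiating $\vec N_x=H(1+e^{-F}){\cal X}_x$ and $\vec N_y=H(1-e^{-F}){\cal X}_y$ once more and substituting the formulas for ${\cal X}_{xx},{\cal X}_{xy},{\cal X}_{yy}$, all tangential terms cancel and the $\vec N$-coefficient collapses to $-4H\cosh F=-V$. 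Hence for every constant $a\in\bfR^3$,
\[ {\cal L}\langle a,\vec N\rangle=-\triangle\langle a,\vec N\rangle-V\langle a,\vec N\rangle=-\langle a,\triangle\vec N\rangle-V\langle a,\vec N\rangle=0\,. \]
A parallel computation, using in addition $\triangle{\cal X}={\cal X}_{xx}+{\cal X}_{yy}=-2e^F\vec N$ together with $a\times{\cal X}_x\perp{\cal X}_x$, $a\times{\cal X}_y\perp{\cal X}_y$, $a\times\vec N\perp\vec N$, gives ${\cal L}\langle a\times{\cal X},\vec N\rangle=0$ as well. Conceptually these are the Jacobi fields induced by the translations and rotations of $\bfR^3$, which move ${\cal W}_{\ell/n}$ through congruent (hence CMC) surfaces; equivalently, ${\cal L}$ is $-E$ times the intrinsic Jacobi operator $\triangle_{ds^2}+4H^2-2K$, so $\ker{\cal L}$ is exactly the space of Jacobi fields. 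All these functions are smooth on the compact torus $\bfC/\Gamma$, hence lie in $L^2(\bfC/\Gamma)$.

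Next, fixing an orthonormal basis $e_1,e_2,e_3$ of $\bfR^3$, the above defines a linear map
\[ T:\bfR^3\oplus\bfR^3\longrightarrow\ker{\cal L}\,,\qquad T(a,b)=\langle a+b\times{\cal X},\vec N\rangle\,, \]
and it remains to prove $T$ is injective. Suppose $T(a,b)\equiv 0$; then the restriction to ${\cal W}_{\ell/n}$ of the Killing field $Z(p)=a+b\times p$ of $\bfR^3$ is everywhere orthogonal to $\vec N$, i.e.\ everywhere tangent to ${\cal W}_{\ell/n}$, so by compactness the one-parameter group of ambient isometries generated by $Z$ carries ${\cal W}_{\ell/n}$ onto itself. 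If $b=0$ and $a\neq 0$, this group consists of translations along $\bfR a$, forcing ${\cal W}_{\ell/n}$ (foliated by lines in direction $a$) to be unbounded. If $b\neq 0$, then after translating the origin $Z$ generates a group of screw motions about an axis parallel to $b$; the translational component along that axis must vanish (otherwise the orbit of a point off the axis is an unbounded helix), so the group consists of rotations about a fixed axis, and ${\cal W}_{\ell/n}$ is then a compact constant-mean-curvature surface of revolution, hence a round sphere by Delaunay's classification --- contradicting that ${\cal W}_{\ell/n}$ is a torus. Therefore $(a,b)=0$, $T$ is injective, and $\mbox{Null}({\cal L})\geq 6$.

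The step I expect to require the most care is the injectivity of $T$, which is precisely the statement that ${\cal W}_{\ell/n}$ admits no nontrivial one-parameter group of ambient symmetries; the compactness-plus-Delaunay argument above settles this cleanly, and it is the only nonroutine point. As an alternative to that argument, since Walter's formulas make ${\cal X}$ and $\vec N$ fully explicit in terms of $\mbox{cn}_k$ and $\mbox{cn}_{\bar k}$, one could instead establish linear independence of the six functions $\langle e_i,\vec N\rangle,\langle e_i\times{\cal X},\vec N\rangle$ by direct evaluation. The membership of the six functions in $\ker{\cal L}$ is entirely mechanical once the identity $\triangle\vec N=-V\vec N$ is recorded.
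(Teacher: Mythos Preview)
Your proof is correct and follows essentially the same strategy as the paper: both exhibit the six Jacobi fields coming from the Killing fields of $\bfR^3$ and argue that the map is injective because a nontrivial kernel would force ${\cal W}_{\ell/n}$ to be invariant under a one-parameter group of rigid motions. Your version is more self-contained in that you actually justify the absence of such a continuous symmetry (via compactness and Delaunay's classification), whereas the paper simply asserts that ${\cal W}_{\ell/n}$ is not invariant under any rigid motion; your argument thus applies to any compact CMC torus without appeal to Walter's explicit formulas.
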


\begin{proof} 
For any vector field $\vec{v}$ on $\bfR^3$ generated by a 
rigid motion of $\bfR^3$, 
the normal part $\langle \vec{v},\vec{N} 
\rangle_{\bfR^3} \vec{N}$ on ${\cal W}_{\ell/n}$ is a Jacobi 
field on ${\cal W}_{\ell/n}$, that is, 
${\cal L}(\langle \vec{v},\vec{N} \rangle_{\bfR^3}) = 0$.  
Suppose Null(${\cal L}$)$\leq 5$.  Then there is 
some such $\vec{v}$ entirely tangent to 
${\cal W}_{\ell/n}$, since the rigid motions of $\bfR^3$ form 
a 6 dimensional group.  Choose such a $\vec{v}$.  For each
$p \in {\cal W}_{\ell/n}$, let $\phi_p(t)$ be the 
integral curve 
of $\vec{v}$ in ${\cal W}_{\ell/n}$ such that $\phi_p(0) = p$.  
Since $d\phi_p(t)/dt = \vec{v}_{\phi_p(t)}$, 
$\phi_p(t)$ is also an integral curve of $\vec{v}$ in $\bfR^3$, 
hence ${\cal W}_{\ell/n}$ is invariant under the rigid 
motion that generates $\vec{v}$. But  
${\cal W}_{\ell/n}$ is not invariant under any 
rigid motion, a contradiction, proving the lemma.  
\end{proof}

\subsection{Eigenvalues of the Laplacian}

By Lemma 2.4, our goal becomes to compute the number of 
negative eigenvalues of ${\cal L}$.  In order to do this, 
we use a convenient fact: we know explicitly the complete set 
of eigenvalues $\alpha_i$ and eigenfunctions $u_i$ of $- \triangle$.  
For $\bfC/\Gamma$, with 
$\Gamma=\mbox{span}_{\bfZ} \{(a_1,a_2),(b_1,b_2)\}$, the 
complete set of eigenvalues of $- \triangle u_i = 
\alpha_i u_i$ are 
\[ \frac{4\pi^2}{(a_1b_2 - a_2b_1)^2} \left( (m_2b_2-m_1a_2)^2+
(m_1a_1-m_2b_1)^2 \right) \; \; , \] with corresponding 
eigenfunctions \[ c_{m_1,m_2} \cdot 
(\sin \, \mbox{{\scriptsize or}} \, \cos) \left(
\frac{2\pi}{a_1b_2 - a_2b_1} ((m_2b_2-m_1a_2)x+
(m_1a_1-m_2b_1)y) \right) \; \; , \]
for $m_1,m_2 \in \bfZ$.  We want these 
eigenfunctions to have $L^2$-norm equal to 1, so we choose 
$c_{m_1,m_2} = \sqrt{2/(|a_1b_2-a_2b_1|)}$ if $|m_1|+|m_2|>0$ and 
$c_{0,0}=\sqrt{1/(|a_1b_2-a_2b_1|)}$.  

When $\ell$ is odd, by Lemma 2.3, we have
$a_1=nx_{\ell n}$, $a_2=b_1=0$, $b_2=y_{\ell n}$, and 
we can list these eigenvalues and 
eigenfunctions by choosing $m_1$ and 
$m_2$ in the following order: $(m_2,m_1)=(0,0)$, 
$(1,0)$, $(0,1)$, $(2,0)$, $(1,1)$, 
$(1,-1)$, $(0,2)$, $(3,0)$, $(2,1)$, $(2,-1)$, $(1,2)$, 
$(1,-2)$, $(0,3)$, $... \; $, always choosing sine 
first and cosine second.  
We can continue in this way to define all $\alpha_i$ and 
$u_i$.  The eigenfunctions $u_i$ form an orthonormal 
basis for $L^2$.  

When $\ell$ is even, by Lemma 2.3, we have 
$a_1=nx_{\ell n}/2$, $a_2= y_{\ell n}/2$, 
$b_1=0$, $b_2=y_{\ell n}$, and again we can list the eigenvalues and 
eigenfunctions by choosing $m_1$ and 
$m_2$ so that $(2m_2-m_1,m_1)$ has the following order: 
$(2m_2-m_1,m_1)=(0,0)$, $(2,0)$, $(1,1)$, 
$(1,-1)$, $(0,2)$, $(4,0)$, $(3,1)$, $(3,-1)$, $(2,2)$, 
$(2,-2)$, $(1,3)$, $(1,-3)$, $(0,4)$, $... \; $, always choosing 
sine first and cosine second.  
Again the eigenfunctions $u_i$ will form an orthonormal 
basis for $L^2$.  

Note that, with these orderings, we do not 
necessarily have $\alpha_i \leq 
\alpha_{i^\prime}$ for $i<i^\prime$.  However, we still have 
$\lim_{i \to \infty} \alpha_i = + \infty$.  

Since we later refer to the $\alpha_i$ and $u_i$ without restating 
their definitions, an unambiguous understanding of their 
ordering is needed, so for clarity we list 
the first $13$ $\alpha_i$, $u_i$ in Table 1.  

\begin{table}[htb]
    \begin{center}
    \begin{tabular}{c|c|c|c}
 eigenvalues  & eigenfunctions & eigenvalues  & 
                eigenfunctions\\
 $\alpha_i$ for $\ell$ odd & $u_i$ for 
$\ell$ odd & $\alpha_i$ for $\ell$ even &
               $u_i$ for $\ell$ even \\
\hline
 $\alpha_1 = 0 $  & $u_1 = \frac{1}
{\sqrt{n x_{\ell n} y_{\ell n}}}$ & 
$\alpha_1 = 0 $  & $u_1 = \frac{\sqrt{2}}
{\sqrt{n x_{\ell n} y_{\ell n}}}$  \\ \hline
$\alpha_2 = \frac{4 \pi^2}{n^2x_{\ell n}^2} $ & 
$u_2 = 
\frac{\sin(2 \pi x/(nx_{\ell n}))}
{\sqrt{n x_{\ell n} y_{\ell n}/2}} $ & 
$\alpha_2 = \frac{16 \pi^2}{n^2x_{\ell n}^2} $ & 
$u_2 = 
\frac{\sin(4 \pi x/(nx_{\ell n}))}
{\sqrt{n x_{\ell n} y_{\ell n}/4}} $  \\ \hline
$\alpha_3 = \frac{4 \pi^2}{n^2x_{\ell n}^2} $ & 
$u_3 = 
\frac{\cos(2 \pi x/(nx_{\ell n}))}{\sqrt{n 
x_{\ell n} y_{\ell n}/2}} $ & 
$\alpha_3 = \frac{16 \pi^2}{n^2x_{\ell n}^2} $ & 
$u_3 = 
\frac{\cos(4 \pi x/(nx_{\ell n}))}{\sqrt{n 
x_{\ell n} y_{\ell n}/4}} $  \\ \hline
$\alpha_4 
= \frac{4 \pi^2}{y_{\ell n}^2} $ & $u_4 = 
\frac{\sin(2 \pi y/y_{\ell n})}{\sqrt{n x_{\ell n} 
y_{\ell n}/2}} $ &
$\alpha_4 
= \frac{4 \pi^2}{n^2x_{\ell n}^2}+ 
\frac{4 \pi^2}{y_{\ell n}^2} $ & $u_4 = 
\frac{\sin(
\frac{2 \pi x}{nx_{\ell n}}+
\frac{2 \pi y}{y_{\ell n}})}{\sqrt{n x_{\ell n} 
y_{\ell n}/4}} $  \\ \hline
$\alpha_5 
= \frac{4 \pi^2}{y_{\ell n}^2} $ & $u_5 = 
\frac{\cos(2 \pi y/y_{\ell n})}{\sqrt{n 
x_{\ell n} y_{\ell n}/2}} $ & 
$\alpha_5 
= \frac{4 \pi^2}{n^2x_{\ell n}^2}+ 
\frac{4 \pi^2}{y_{\ell n}^2} $ & $u_5 = 
\frac{\cos(
\frac{2 \pi x}{nx_{\ell n}}+
\frac{2 \pi y}{y_{\ell n}})}{\sqrt{n x_{\ell n} 
y_{\ell n}/4}} $  \\ \hline
$\alpha_6 = \frac{16 \pi^2}{n^2x_{\ell n}^2} $ & 
$u_6 = 
\frac{\sin(4 \pi x/(nx_{\ell n}))}{\sqrt{n 
x_{\ell n} y_{\ell n}/2}} $ & 
$\alpha_6 = \frac{4 \pi^2}{n^2x_{\ell n}^2}+ 
\frac{4 \pi^2}{y_{\ell n}^2} $ & 
$u_6 = 
\frac{\sin(\frac{2 \pi x}{nx_{\ell n}}-
\frac{2 \pi y}{y_{\ell n}})}{\sqrt{n 
x_{\ell n} y_{\ell n}/4}} $  \\ \hline
$\alpha_7 = \frac{16 \pi^2}{n^2x_{\ell n}^2} $ & 
$u_7 = 
\frac{\cos(4 \pi x/(nx_{\ell n}))}{\sqrt{n 
x_{\ell n} y_{\ell n}/2}} $ & 
$\alpha_7 = \frac{4 \pi^2}{n^2x_{\ell n}^2}+ 
\frac{4 \pi^2}{y_{\ell n}^2} $ & 
$u_7 = 
\frac{\cos(\frac{2 \pi x}{nx_{\ell n}}-
\frac{2 \pi y}{y_{\ell n}})}{\sqrt{n 
x_{\ell n} y_{\ell n}/4}} $  \\ \hline
$\alpha_8 = 
\frac{4 \pi^2 }{n^2x_{\ell n}^2}+\frac{4 \pi^2 }{y_{\ell n}^2} $ & 
$u_8 = 
\frac{\sin(  \frac{2 \pi x}{nx_{\ell n}}
 +\frac{2 \pi y}{y_{\ell n}} )}
{\sqrt{n x_{\ell n} y_{\ell n}/2}} $ & 
$\alpha_8 = 
\frac{16 \pi^2 }{y_{\ell n}^2} $ & 
$u_8 = 
\frac{\sin(4 \pi y/y_{\ell n})}
{\sqrt{n x_{\ell n} y_{\ell n}/4}} $  \\ \hline
$\alpha_9 = 
\frac{4 \pi^2 }{n^2x_{\ell n}^2}+\frac{4 \pi^2 }{y_{\ell n}^2} $ & 
$u_9 = 
\frac{\cos(\frac{2 \pi x}{nx_{\ell n}}+\frac{2 \pi y}{y_{\ell n}})}
{\sqrt{n x_{\ell n} y_{\ell n}/2}} $ & 
$\alpha_9 = 
\frac{16 \pi^2 }{y_{\ell n}^2} $ & 
$u_9 = 
\frac{\cos(4 \pi y/y_{\ell n})}
{\sqrt{n x_{\ell n} y_{\ell n}/4}} $  \\ \hline
$\alpha_{10} = 
\frac{4 \pi^2 }{n^2x_{\ell n}^2}+\frac{4 \pi^2 }{y_{\ell n}^2} $ & 
$u_{10} = \frac{\sin(
\frac{2 \pi x}{nx_{\ell n}}-\frac{2 \pi y}{y_{\ell n}})}
{\sqrt{n x_{\ell n} y_{\ell n}/2}} $ & 
$\alpha_{10} = 
\frac{64 \pi^2 }{n^2x_{\ell n}^2} $ & 
$u_{10} = \frac{\sin(8 \pi x/(nx_{\ell n}))}
{\sqrt{n x_{\ell n} y_{\ell n}/4}} $  \\ \hline
$\alpha_{11} = 
\frac{4 \pi^2 }{n^2x_{\ell n}^2}+\frac{4 \pi^2 }{y_{\ell n}^2} $ & 
$u_{11} = \frac{\cos(
\frac{2 \pi x}{nx_{\ell n}}-\frac{2 \pi y}{y_{\ell n}})}
{\sqrt{n x_{\ell n} y_{\ell n}/2}} $ & 
$\alpha_{11} = 
\frac{64 \pi^2 }{n^2x_{\ell n}^2} $ & 
$u_{11} = \frac{\cos(8 \pi x/(nx_{\ell n}))}
{\sqrt{n x_{\ell n} y_{\ell n}/4}} $  \\ \hline
$\alpha_{12} = 
\frac{16 \pi^2}{y_{\ell n}^2} $ & 
$u_{12} = 
\frac{\sin(4 \pi y/y_{\ell n})}{\sqrt{n 
x_{\ell n} y_{\ell n}/2}} $ & 
$\alpha_{12} = 
\frac{36 \pi^2 }{n^2x_{\ell n}^2}+\frac{4 \pi^2 }{y_{\ell n}^2} $ & 
$u_{12} = \frac{\sin(
\frac{6 \pi x}{nx_{\ell n}}+\frac{2 \pi y}{y_{\ell n}})}
{\sqrt{n x_{\ell n} y_{\ell n}/4}} $  \\ \hline
$\alpha_{13} = 
\frac{16 \pi^2}{y_{\ell n}^2} $ & 
$u_{13} = 
\frac{\cos(4 \pi y/y_{\ell n})}{\sqrt{n x_{\ell n} 
y_{\ell n}/2}} $ & 
$\alpha_{13} = 
\frac{36 \pi^2 }{n^2x_{\ell n}^2}+\frac{4 \pi^2 }{y_{\ell n}^2} $ & 
$u_{13} = \frac{\cos(
\frac{6 \pi x}{nx_{\ell n}}+\frac{2 \pi y}{y_{\ell n}})}
{\sqrt{n x_{\ell n} y_{\ell n}/4}} $  \end{tabular}
    \end{center}
\caption{The first $13$ eigenvalues 
and eigenfunctions of $-\triangle u_i = \alpha_i u_i$.}
\end{table}

\section{Application of Courant's nodal domain theorem}

In this section, we consider a Jacobi function on each surface 
${\cal W}_{\ell/n}$ produced 
from rotation about the surface's central axis.  Using the 
geometry of the surface, we can estimate the number of nodal 
domains of this function, and then the Courant nodal domain theorem 
gives the following lower bound for the index:  

\begin{lemma}
Ind(${\cal W}_{\ell/n}$) $\geq 2n-2$ if $\ell$ is odd, and 
Ind(${\cal W}_{\ell/n}$) $\geq n-2$ if $\ell$ is even.  
\end{lemma}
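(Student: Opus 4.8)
The plan is to produce a nontrivial solution of ${\cal L}u=0$ on $\bfC/\Gamma$ whose nodal set cuts ${\cal W}_{\ell/n}$ into many pieces, and then to convert the nodal count into a lower bound for the number of negative eigenvalues of ${\cal L}$. Take $\vec{v}$ to be the Killing field of $\bfR^3$ generated by rotation about the central axis of ${\cal W}_{\ell/n}$, and set $u:=\langle\vec{v},\vec{N}\rangle_{\bfR^3}$. By the identity recalled in the proof of Lemma 2.6, ${\cal L}(u)=0$; moreover $u\not\equiv 0$, since otherwise $\vec{v}$ would be everywhere tangent to ${\cal W}_{\ell/n}$, which (as in that proof) would force ${\cal W}_{\ell/n}$ to be invariant under the rotation -- impossible.

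Next I would locate the nodal set of $u$ from the symmetries in Walter's description. If $\Pi$ is a plane containing the rotation axis across which ${\cal W}_{\ell/n}$ is reflection symmetric, the reflection $R_\Pi$ reverses $\vec{v}$ (a reflection in a plane through the rotation axis conjugates the rotation to its inverse) and preserves the outward normal $\vec{N}$, so $u\circ R_\Pi=-u$; hence $u$ vanishes along $\Pi\cap{\cal W}_{\ell/n}$ and changes sign across it. (Reflection across a plane orthogonal to the axis instead fixes both $\vec{v}$ and $\vec{N}$, so $u$ is even across such a plane and no sign change occurs -- this difference is what produces the odd/even dichotomy.) Using Walter's and Abresch's explicit picture of ${\cal W}_{\ell/n}$ -- in particular Lemma 2.3 and the fact that the surface is tiled by reflecting a fundamental piece across the planes of its bounding planar geodesics -- one checks that the planar curvature lines $\{x=kx_{\ell n}/2\}$ lie in planes through the axis and, on the flat torus $\bfC/\Gamma$, are disjoint, parallel, noncontractible loops. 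Counting these loops (this is exactly where the two forms of $\Gamma$ in Lemma 2.3 and the parity of $n$ intervene) shows the nodal set of $u$ separates $\bfC/\Gamma$ into at least $2n$ components when $\ell$ is odd and at least $n$ components when $\ell$ is even; the sign change of $u$ across each separating loop forces adjacent components to be distinct nodal domains, so $u$ has at least $2n$ (resp.\ $n$) nodal domains.

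Finally I would prove the general fact: \emph{if $w$ solves ${\cal L}w=0$, $w\not\equiv 0$, and $w$ has $m$ nodal domains $D_1,\dots,D_m$, then ${\cal L}$ has at least $m-1$ negative eigenvalues.} Let $w_i$ be $w$ restricted to $D_i$ and extended by $0$; the $w_i$ are independent and span an $m$-dimensional space $W$, and since ${\cal L}w=0$ on each $D_i$ one gets $\int_{\bfC/\Gamma}\phi{\cal L}\phi\,dxdy=0$ for all $\phi\in W$. If ${\cal L}$ had at most $m-2$ negative eigenvalues, then $W$ would meet the $L^2$-orthogonal complement of the negative eigenspace in dimension $\geq 2$; by the variational characterization (2.3) every $\phi$ in that intersection has $\int\phi{\cal L}\phi\,dxdy\geq 0$, so in fact $\int\phi{\cal L}\phi\,dxdy=0$ and $\phi\in\ker{\cal L}$, hence $\phi$ is smooth. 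That intersection contains $w=\sum_iw_i$, so it also contains some $\phi=\sum_ic_iw_i$ with the $c_i$ not all equal; since the nodal domains form a connected adjacency graph, two adjacent domains $D_i,D_j$ have $c_i\neq c_j$, and then across their shared interface smoothness of $\phi=c_iw$ on one side and $\phi=c_jw$ on the other forces $\partial_\nu w=0$ there in addition to $w=0$, so unique continuation for ${\cal L}w=0$ gives $w\equiv 0$, a contradiction. Applying this with $w=u$ and $m=2n$ (resp.\ $m=n$), together with Lemma 2.4, yields Ind(${\cal W}_{\ell/n}$)$\geq m-2$, i.e.\ $2n-2$ for $\ell$ odd and $n-2$ for $\ell$ even.

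I expect the geometric count in the second step to be the main obstacle: one must read off from \cite{Wa} and \cite{A} exactly which reflection planes of ${\cal W}_{\ell/n}$ pass through the rotation axis and verify that they chop the surface into the asserted number of sign-alternating regions, with the factor-of-two gap between the two cases traced to the two lattices in Lemma 2.3 and the parity of $n$. I would also flag that the textbook form of Courant's theorem (the $k$-th eigenfunction has at most $k$ nodal domains) only shows that ${\cal L}$ has at least $m-$Null(${\cal L}$) negative eigenvalues, which is useless here because Lemma 2.6 bounds Null(${\cal L}$) only from below; the unique-continuation argument in the third step is precisely what sharpens this to Ind(${\cal W}_{\ell/n}$)$\geq m-2$.
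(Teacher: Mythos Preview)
Your argument is correct and follows the same strategy as the paper: take the Jacobi function $u=\langle\vec v,\vec N\rangle$ coming from rotation about the axis, observe that it vanishes along the $2n$ (resp.\ $n$) planar geodesics $\{2x/x_{\ell n}\in\bfZ\}$, and convert the resulting nodal-domain count into a lower bound on the number of negative eigenvalues.  The only substantive difference is your third step, where you prove from scratch (via unique continuation) that a null eigenfunction with $m$ nodal domains forces at least $m-1$ negative eigenvalues.  This extra work is unnecessary: the standard Courant nodal domain theorem already gives exactly this.  The point you seem to have missed is that the orthonormal eigenbasis $v_1,v_2,\dots$ is not fixed within an eigenspace; since $u$ lies in the $0$-eigenspace, one may simply \emph{choose} the basis so that $v_{k+1}=u/\|u\|$ (this is what the paper means by ``we may assume $v=v_{k+1}$''), and then Courant's theorem, applied to the index $k+1$, bounds the nodal count of $u$ by $k+1$ directly.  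Your unique-continuation argument is in fact the heart of the standard proof of Courant's theorem, so you have essentially reproved the relevant case rather than cited it.  (Two small slips: you write ``parity of $n$'' where the dichotomy in Lemma~2.3 is governed by the parity of $\ell$; and your aside about horizontal reflection planes, while correct, is not what drives the $2n$ vs.\ $n$ count---that comes purely from the two lattices $\Gamma$.)
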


\begin{proof}
Let $\vec{v}$ be the variational vector field of 
$\bfR^3$ associated to rotation about the axis of rotational 
symmetry of ${\cal W}_{\ell/n}$.  The normal part 
$\langle \vec{v},\vec{N} \rangle_{\bfR^3} \vec{N}$ is a 
Jacobi field on ${\cal W}_{\ell/n}$, that is, 
$v := \langle \vec{v},\vec{N} \rangle_{\bfR^3}$ satisfies 
${\cal L}(v) = 0$.  Letting $k$ be the number of negative 
eigenvalues of ${\cal L}$, we may assume $v= v_{k+1}$.  

For odd $\ell$ (resp. even $\ell$), the set 
$\{[x,y] \in \bfC/\Gamma \, | \, 2x/x_{\ell n} \in \bfZ \}$ is 
mapped by $\cal X$ to $2n$ (resp. $n$) closed planar geodesics 
in ${\cal W}_{\ell/n}$, each 
lying in a plane containing the axis of rotational symmetry 
of ${\cal W}_{\ell/n}$.  Thus $v_{k+1}$ is zero along these 
$2n$ (resp. $n$) curves, so $v_{k+1}$ has at least 
$2n$ (resp. $n$) nodal domains.  

Courant's nodal domain theorem, which is valid in our 
setting (for example, the proof in \cite{C} easily extends 
to a proof of this theorem for our operator ${\cal L}$), says 
that the number of nodal 
domains of $v_j$ is at most $j$ for all
$j \in\bfZ^+$.  Choosing $j = k+1$, we conclude that 
$2n \leq k+1$ if $\ell$ is odd, and $n \leq k+1$ if 
$\ell$ is even.  Hence there are at least $2n-1$ (resp. $n-1$) 
negative eigenvalues of ${\cal L}$ if $\ell$ is odd (resp. even).  
Lemma 2.4 then implies the result.  
\end{proof}

We immediately have the following corollary: 

\begin{corollary}
For every $N \in \bfZ$, only finitely many 
${\cal W}_{\ell/n}$ satisfy Ind(${\cal W}_{\ell/n}$)$\leq N$.  
\end{corollary}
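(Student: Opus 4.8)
The plan is to deduce the corollary directly from Lemma 3.1 together with the parametrization of symmetric Wente tori in Lemma 2.1. First I would recall that, by Lemma 2.1, the surfaces ${\cal W}_{\ell/n}$ are in one-to-one correspondence with the reduced fractions $\ell/n \in (1,2)$; in particular every such surface satisfies $n < \ell < 2n$. Consequently a pair $(\ell,n)$ ranges over only finitely many possibilities once an upper bound on the denominator $n$ is known.

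Next I would invoke Lemma 3.1. In the odd case it gives Ind(${\cal W}_{\ell/n}$) $\geq 2n-2$, and in the even case Ind(${\cal W}_{\ell/n}$) $\geq n-2$; since $2n-2 \geq n-2$, in all cases Ind(${\cal W}_{\ell/n}$) $\geq n-2$. Hence if Ind(${\cal W}_{\ell/n}$) $\leq N$ then $n \leq N+2$, and therefore $\ell < 2n \leq 2N+4$ as well. So only finitely many pairs $(\ell,n)$ — equivalently, only finitely many reduced fractions $\ell/n \in (1,2)$, equivalently only finitely many surfaces ${\cal W}_{\ell/n}$ — can satisfy the stated bound. (If $N<0$ the set is empty, since the index is nonnegative, and the empty set is finite, so the conclusion still holds.)

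There is essentially no obstacle here: the corollary is an immediate consequence of the fact that Lemma 3.1 furnishes a lower bound for the index that grows linearly in $n$, combined with the fact that there are only finitely many elements of $\{\,\ell/n \in (1,2) : \ell/n \text{ reduced}\,\}$ with bounded denominator. The only point worth spelling out is why bounding $n$ also bounds $\ell$, which is precisely the constraint $1 < \ell/n < 2$ supplied by Lemma 2.1.
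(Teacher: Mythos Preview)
Your proposal is correct and follows exactly the approach the paper intends: the paper simply states that the corollary follows ``immediately'' from Lemma 3.1, and you have spelled out the straightforward counting argument (bounding $n$ via the index lower bound, then bounding $\ell$ via $1<\ell/n<2$) that makes this immediate.
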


We remark that Courant's nodal domain theorem can be 
similarly applied to produce lower bounds for the index 
of many of the 
surfaces described in \cite{GBP}, \cite{HL}, and \cite{K1}.  

\section{Bounds for index derived from bounds for $V$}

As in the previous section, we again search for rough bounds for the 
index of the surfaces ${\cal W}_{\ell/n}$.  But instead of using a Jacobi 
function, this time we will use upper and lower bounds for the potential 
function, giving us both upper and lower bounds for the index.  When 
the potential function is replaced by a constant upper or lower bound, 
we know the eigenvalues of the new resulting operator explicitly, leading to 
the lemma below.  (This idea can be found in \cite{FS}, equation (15) of 
Chapter 6.)  

Let $\alpha_{\rho(1)}, \alpha_{\rho(2)}, ... $ be the complete 
set of eigenvalues with multiplicity $1$ of the operator
$- \triangle$ on the flat torus $\bfC/\Gamma$, defined as in 
section 2.3, but reordered by the permutation $\rho(i)$ so that 
$\alpha_{\rho(1)} \leq \alpha_{\rho(2)}
\leq ... \nearrow + \infty$.  (We use the reordering 
$\rho(i)$ only in this section.)  Define the constants 
\[ V_{min} = \min_{[x,y] \in \bfC/\Gamma} V(x,y) \; 
, \; \; \; 
V_{max} = \max_{[x,y] \in \bfC/\Gamma} V(x,y) \; . \]
Consider the operators ${\cal L}_{min} = - \triangle 
- V_{min}$ and 
${\cal L}_{max} = - \triangle - V_{max}$.  The 
complete set of eigenvalues with multiplicity $1$ of 
${\cal L}_{min}$ (resp. ${\cal L}_{max}$) is then 
$\underline{\alpha}_{\rho(i)} = \alpha_{\rho(i)} - V_{min}$ (resp. 
$\overline{\alpha}_{\rho(i)} = \alpha_{\rho(i)} - V_{max}$).  

Recall that $\beta_i$ is a complete set of eigenvalues with 
multiplicity $1$ of ${\cal L}$.  Comparing (2.3) with 
similar variational characterizations for 
the eigenvalues $\underline{\alpha}_{\rho(i)}$ and 
$\overline{\alpha}_{\rho(i)}$, we have 
\[ \underline{\alpha}_{\rho(i)} = \alpha_{\rho(i)} - V_{min} 
\geq \beta_i \geq \overline{\alpha}_{\rho(i)} = \alpha_{\rho(i)} - 
V_{max} \; \; \forall i \; 
. \]  By Lemma 2.4, we conclude:  

\begin{lemma}  
Choose $\mu \in \bfZ^+$ (resp. $\nu \in 
\bfZ^+$) so that $\alpha_{\rho(\mu)} < V_{min}$ (resp. 
$\alpha_{\rho(\nu)} < V_{max}$) and 
$\alpha_{\rho(\mu+1)} \geq V_{min}$ (resp. 
$\alpha_{\rho(\nu+1)} \geq V_{max}$).  Then 
\[ \mu - 1 \leq \mbox{Ind}({\cal W}_{\ell/n}) \leq \nu \; . \] 
\end{lemma}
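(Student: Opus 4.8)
The plan is to derive Lemma 4.1 purely from the eigenvalue comparison already established, namely that $\underline{\alpha}_{\rho(i)} \geq \beta_i \geq \overline{\alpha}_{\rho(i)}$ for all $i$, together with Lemma 2.4. The content of the lemma is just a careful translation of these inequalities into counts of negative eigenvalues of ${\cal L}$. The main point is that $\mu$ and $\nu$, as defined, are exactly the number of negative eigenvalues of ${\cal L}_{min}$ and ${\cal L}_{max}$ respectively, because the eigenvalues of ${\cal L}_{min}$ are $\alpha_{\rho(i)} - V_{min}$ listed in nondecreasing order, so the negative ones are precisely those indices $i$ with $\alpha_{\rho(i)} < V_{min}$, and by the defining property of $\mu$ these are $i = 1, \dots, \mu$. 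Similarly ${\cal L}_{max}$ has exactly $\nu$ negative eigenvalues.

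\textbf{Key steps.} First I would observe that since $\alpha_{\rho(i)}$ is nondecreasing in $i$, the condition ``$\alpha_{\rho(\mu)} < V_{min}$ and $\alpha_{\rho(\mu+1)} \geq V_{min}$'' is well-posed (such a $\mu$ exists and is unique once $\alpha_{\rho(1)} = 0 < V_{min}$, which holds because $V = 4H\cosh F \geq 4H > 0$) and forces $\underline{\alpha}_{\rho(i)} < 0$ for $i \leq \mu$ and $\underline{\alpha}_{\rho(i)} \geq 0$ for $i > \mu$. Next, from $\beta_i \leq \underline{\alpha}_{\rho(i)}$ we get $\beta_i < 0$ for all $i \leq \mu$, so ${\cal L}$ has at least $\mu$ negative eigenvalues; Lemma 2.4 then gives $\mathrm{Ind}({\cal W}_{\ell/n}) \geq \mu - 1$. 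Symmetrically, from $\beta_i \geq \overline{\alpha}_{\rho(i)} = \alpha_{\rho(i)} - V_{max}$ and the defining property of $\nu$, we get $\beta_i \geq 0$ for all $i > \nu$, so ${\cal L}$ has at most $\nu$ negative eigenvalues, and Lemma 2.4 gives $\mathrm{Ind}({\cal W}_{\ell/n}) \leq \nu$. Combining the two bounds yields $\mu - 1 \leq \mathrm{Ind}({\cal W}_{\ell/n}) \leq \nu$.

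\textbf{Main obstacle.} There is no real obstacle here — the inequality $\underline{\alpha}_{\rho(i)} \geq \beta_i \geq \overline{\alpha}_{\rho(i)}$ was already extracted from the min-max characterization (2.3), and the rest is bookkeeping. The one place to be slightly careful is the application of Lemma 2.4, which says $\mathrm{Ind}$ equals $k$ or $k-1$ where $k$ is the number of negative eigenvalues of ${\cal L}$: the lower bound uses $\mathrm{Ind} \geq k - 1 \geq \mu - 1$ (since $k \geq \mu$), and the upper bound uses $\mathrm{Ind} \leq k \leq \nu$ (since $k \leq \nu$). So the asymmetry in the $-1$ in the statement is simply the $-1$ inherent in Lemma 2.4 and appears only on the lower side. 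I would also remark that $\mu \leq \nu$ automatically, since $V_{min} \leq V_{max}$, so the stated interval is nonempty.
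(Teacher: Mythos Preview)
Your proposal is correct and follows exactly the approach the paper takes: the paper establishes the comparison $\underline{\alpha}_{\rho(i)} \geq \beta_i \geq \overline{\alpha}_{\rho(i)}$ from the variational characterization and then simply writes ``By Lemma 2.4, we conclude'' Lemma 4.1, leaving implicit the bookkeeping you have spelled out (that $\mu$ and $\nu$ count the negative eigenvalues of ${\cal L}_{min}$ and ${\cal L}_{max}$, and that Lemma 2.4 introduces the $-1$ only on the lower side).
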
 

Lemma 4.1 gives explicit upper and
lower bounds for the index, since we know the 
$\alpha_{\rho(i)}$ explicitly (see 
Table 2).  Since $V_{max}$ is generally much larger than 
$V_{min}$, these bounds are very rough.  
However, when $\ell/n$ is close to $1$, the bounds get somewhat 
sharp, in the sense that the ratio of the lower and upper 
bounds is close to $1$ (see, for example, the bounds for 
${\cal W}_{21/20}$ and ${\cal W}_{73/72}$ in Table 2).  
We produce much sharper estimates in the next sections.  

\section{Specific spaces on which ${\cal L}$ is negative definite}

We now begin considering the Jacobi operator restricted to finite 
dimensional subspaces.  The essential point is that if the operator 
restricted to some $N$-dimensional subspace is negative definite, then the 
index of ${\cal W}_{\ell/n}$ is at least $N-1$.  We will use subspaces 
generated by 
a finite number of eigenfunctions of the Laplacian.  This will lead us 
to stronger lower bounds for the index.  

Consider a finite subset $\{\tilde{u}_1=u_{i_1},...,
\tilde{u}_N=u_{i_N}\}$ of the eigenfunctions $u_i$ of 
$- \triangle$ on 
$\bfC / \Gamma$ defined in section 2.3 with corresponding 
eigenvalues $\tilde{\alpha}_j = \alpha_{i_j}, \; j=1,...,N$.  
Consider any $u = \sum_{i=1}^N a_i \tilde{u}_i \in 
\mbox{span}\{\tilde{u}_1,...,\tilde{u}_N\}$, $a_1,...,a_N \in 
\bfR$.  
\[ \int_{\bfC / \Gamma} u {\cal L}u dxdy = 
\sum_{i,j = 1}^N a_i a_j \int_{\bfC / \Gamma} 
\tilde{u}_i (-\triangle \tilde{u}_j - V \tilde{u}_j) dxdy 
= \sum_{i,j = 1}^N a_i (\tilde{\alpha}_j 
\delta_{ij} - \tilde{b}_{ij}) a_j \; , 
\] where $\tilde{b}_{ij} := \int_{\bfC / \Gamma} 
V \tilde{u}_i \tilde{u}_j dxdy$.  So $\int_{\bfC/\Gamma} 
u {\cal L}u dxdy < 0$ for all 
nonzero $u \in \mbox{span}\{\tilde{u}_1,...,\tilde{u}_N\}$ if 
and only if 
the matrix $\tilde{A}_N = (\tilde{\alpha}_j \delta_{ij} - 
\tilde{b}_{ij})_{i,j = 1,...,N}$ is negative definite.
The remark following Lemma 2.4 then implies the following 
result:  

\begin{theorem}
If the $N \times N$ matrix $(\tilde{\alpha}_j \delta_{ij} - 
\tilde{b}_{ij})_{i,j = 1,...,N}$ is negative definite, then 
\[ \mbox{Ind}({\cal W}_{\ell/n}) \geq N-1 \; . \]
\end{theorem}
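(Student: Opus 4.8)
The plan is to feed the $N$-dimensional subspace ${\cal V} := \mbox{span}\{\tilde{u}_1,...,\tilde{u}_N\} \subset C^\infty(\bfC/\Gamma)$ directly into the remark following Lemma 2.4, since essentially all of the computational work has already been assembled in the lines preceding the statement. First I would check that ${\cal V}$ is genuinely $N$-dimensional: the functions $\tilde{u}_1,...,\tilde{u}_N$ are distinct members of the orthonormal basis $\{u_i\}$ of $L^2(\bfC/\Gamma)$ and hence linearly independent, and each lies in $C^\infty(\bfC/\Gamma)$, so ${\cal V}$ is indeed an $N$-dimensional subspace of $C^\infty(\bfC/\Gamma)$.

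Next I would recall the identity derived just above the statement: writing $u = \sum_{i=1}^N a_i \tilde{u}_i$ with $a = (a_1,...,a_N)^T \in \bfR^N$, one has $\int_{\bfC/\Gamma} u\,{\cal L}u\,dxdy = \sum_{i,j=1}^N a_i(\tilde{\alpha}_j\delta_{ij} - \tilde{b}_{ij})a_j = a^T \tilde{A}_N a$. By hypothesis $\tilde{A}_N$ is negative definite, so the right-hand side is strictly negative whenever $a \neq 0$; and by the linear independence just noted, $a \neq 0$ is equivalent to $u \neq 0$. Thus $\int_{\bfC/\Gamma} u\,{\cal L}u\,dxdy < 0$ for every nonzero $u \in {\cal V}$.

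Finally I would invoke the remark following Lemma 2.4, which states precisely that the existence of a $j$-dimensional subspace of $C^\infty(\bfC/\Gamma)$ on which $\int_{\bfC/\Gamma} u\,{\cal L}u\,dxdy < 0$ for all nonzero $u$ forces $\mbox{Ind}({\cal W}_{\ell/n}) \geq j-1$. Taking $j = N$ yields the claimed bound $\mbox{Ind}({\cal W}_{\ell/n}) \geq N-1$.

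There is no real obstacle here: the theorem is a repackaging of the remark after Lemma 2.4 together with the explicit bilinear-form calculation. The one point deserving a word of care is the passage from the negative definiteness of $\tilde{A}_N$ (a statement about coefficient vectors in $\bfR^N$) to the negative definiteness of ${\cal L}$ on the nonzero elements of ${\cal V}$, and this rests on the linear independence of the chosen eigenfunctions, which is immediate since they are distinct elements of an orthonormal system.
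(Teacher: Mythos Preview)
Your proposal is correct and matches the paper's own argument essentially line for line: the paper computes $\int_{\bfC/\Gamma} u{\cal L}u\,dxdy = \sum_{i,j} a_i(\tilde{\alpha}_j\delta_{ij}-\tilde{b}_{ij})a_j$, observes that negative definiteness of $\tilde{A}_N$ is equivalent to this quadratic form being negative on all nonzero $u\in\mbox{span}\{\tilde{u}_1,\dots,\tilde{u}_N\}$, and then invokes the remark following Lemma~2.4. Your added remark that the $\tilde{u}_i$ are linearly independent (being distinct elements of an orthonormal system) makes explicit a point the paper leaves implicit, but otherwise the arguments are identical.
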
 

So, for example, consider the symmetric Wente torus 
${\cal W}_{3/2}$.  Choose $N=9$ and choose 
$\tilde{u}_1 = u_1$, 
$\tilde{u}_2 = u_2$, $\tilde{u}_3 = u_3$, 
$\tilde{u}_4 = u_4$, $\tilde{u}_5 = u_5$, 
$\tilde{u}_6 = u_7$, $\tilde{u}_7 = u_8$, 
$\tilde{u}_8 = u_9$, and $\tilde{u}_9 = u_{17}$.  
Then, with $H = 1/2$, the matrix $\tilde{A}_N$ is 
the following $9 \times 9$ negative definite matrix:  
\[ \tilde{A}_N \approx \left(
\begin{array}{ccccccccc}
-9.50&0&0&0&0&0&0&0&0 \vspace{-0.05in} \\ 
0&-7.99&0&0&0&0&0&0&0 \vspace{-0.05in} \\
0&0&-7.99&0&0&0&0&0&0 \vspace{-0.05in} \\
0&0&0&-1.36&0&0&0&0&0 \vspace{-0.05in} \\
0&0&0&0&-13.2&0&0&0&0 \vspace{-0.05in} \\
0&0&0&0&0&-8.70&0&0&0 \vspace{-0.05in} \\
0&0&0&0&0&0&-5.76&0&0 \vspace{-0.05in} \\
0&0&0&0&0&0&0&-5.76&0 \vspace{-0.05in} \\
0&0&0&0&0&0&0&0&-5.50
\end{array}
\right) \; , \]
Hence, by Theorem 5.1, 
\[ \mbox{Ind}({\cal W}_{3/2}) \geq 8 \; . \]
This is the best we can do for the $\ell/n=3/2$ case; any $10 \times 
10$ matrix of the form $(\tilde{\alpha}_i \delta_{ij} - 
\tilde{b}_{ij})_{i,j = 1,...,10}$ will not be negative definite.  

For ${\cal W}_{4/3}$ we use $N=10$ and the functions 
$u_1$,...,$u_9$,$u_{13}$ for 
$\tilde{u}_1,...,\tilde{u}_{10}$, producing the negative 
definite matrix (with $H = 1/2$) 
\[ \tilde{A}_N \hspace{-0.014in} \approx \hspace{-0.02in} \left(
\begin{array}{cccccccccc}
-5.17&0&0&0&0&0&0&0&-3.23&0 \vspace{-0.05in} \\
0&-3.53&0&0&0&0&0&0&0&0  \vspace{-0.05in} \\
0&0&-3.53&0&0&0&0&0&0&0  \vspace{-0.05in} \\
0&0&0&-3.78&0&-2.29&0&0&0&0  \vspace{-0.05in} \\
0&0&0&0&-3.78&0&-2.29&0&0&0  \vspace{-0.05in} \\
0&0&0&-2.29&0&-3.78&0&0&0&0  \vspace{-0.05in} \\
0&0&0&0&-2.29&0&-3.78&0&0&0  \vspace{-0.05in} \\
0&0&0&0&0&0&0&-0.25&0&0  \vspace{-0.05in} \\
-3.23&0&0&0&0&0&0&0&-2.21&0 \vspace{-0.05in} \\
0&0&0&0&0&0&0&0&0&-1.97
\end{array}
\right) \; , \] Thus 
\[ \mbox{Ind}({\cal W}_{4/3}) \geq 9 \; . \] 

\begin{table}[htb]
    \begin{center}
    \begin{tabular}{l|c|c|c|c|c|c|c|c}
      & & & & & & Lemma & Lemma & Lemma \\
      & & & & & & 3.1's lower & 4.1's lower & 4.1's upper \\
     ${\cal W}_{\ell/n}$ &$\theta$ & $x_{\ell n}$ & $y_{\ell n}$ & 
$V_{min}$ & $V_{max}$ &  bound for & bound for & bound for \\
  &  &  & 
 &   &  & Ind(${\cal W}_{\ell/n}$)   & 
  Ind(${\cal W}_{\ell/n}$)   & Ind(${\cal W}_{\ell/n}$)  \\
\hline
      ${\cal W}_{3/2}$ & $17.7324$ & $2.56$ &$4.21$&$2$ &$123.447$ &$2$&
           $2$        &$213$ \\ \hline
      ${\cal W}_{4/3}$ & $12.7898$  & $3.28$&$6.34$& $2$  &$33.0184$ &$1$& 
            $6$       &$81$ \\ \hline
      ${\cal W}_{5/3}$ & $21.4807$  & $1.76$&$2.64$&  $2$ &$680.157$ &$4$& 
           $2$        &$743$ \\ \hline
      ${\cal W}_{5/4}$ & $9.9285$  & $3.60$ &$7.90$& $2$ & $17.9591$&$6$& 
           $16$        &$169$ \\ \hline
      ${\cal W}_{7/4}$ & $22.8449$  & $1.33$ &$1.94$& $2$ &$2200.7$ &$6$& 
           $2$        &$1815$ \\ \hline 
      ${\cal W}_{6/5}$ & $8.0983$  & $3.79$&$9.18$&  $2$  &$12.4273$ &$3$& 
           $14$        &$83$ \\ \hline 
      ${\cal W}_{7/5}$ & $14.8978$  &$3.00$ &$5.38$& $2$    &$54.5652$ &$8$& 
           $12$        &$351$ \\ \hline 
      ${\cal W}_{8/5}$ & $20.1374$  &$2.08$ &$3.23$& $2$    & $319.339$&$3$& 
           $2$        &$433$ \\ \hline 
      ${\cal W}_{9/5}$ & $23.4867$  &$1.07$ &$1.54$& $2$    &$5426.0$ &$8$& 
            $2$       &$3569$ \\ \hline 
      ${\cal W}_{7/6}$ & $6.8332$  &$3.91$ &$10.3$& $2$  &$9.65614$ &$10$& 
            $38$       &$189$ \\ \hline 
      ${\cal W}_{11/6}$ & $23.8382$ &$0.89$ &$1.28$ & $2$  &$11308.6$ &$10$& 
            $2$       &$6191$ \\ \hline 
      ${\cal W}_{8/7}$ & $5.9081$  &$3.99$ &$11.3$&  $2$ &$8.01729$ &$5$& 
            $24$       &$97$ \\ \hline 
      ${\cal W}_{9/7}$ & $11.1844$  &$3.47$ &$7.16$&  $2$  &$23.2959$ &$12$& 
            $28$       &$323$ \\ \hline 
      ${\cal W}_{10/7}$ & $15.7491$  & $2.88$&$5.02$& $2$   &$68.2461$ &$5$& 
            $8$       &$277$ \\ \hline 
      ${\cal W}_{11/7}$ & $19.4966$  &$2.22$ &$3.50$& $2$   &$238.928$ &$12$& 
            $6$       &$1037$ \\ \hline 
      ${\cal W}_{12/7}$ & $22.3044$  &$1.52$ &$2.24$&  $2$  &$1278.61$ &$5$& 
            $2$       &$1201$ \\ \hline 
      ${\cal W}_{13/7}$ & $24.0512$  &$0.77$ &$1.10$&  $2$  &$21012.8$ &$12$& 
            $2$       &$9863$ \\ \hline
      ${\cal W}_{21/20}$ & $2.1359$  &$4.29$ &$20.2$&  $2$  &$3.54102$ &$38$& 
            $278$       &$491$ \\ \hline
      ${\cal W}_{73/72}$ & $0.6005$  &$4.40$ &$39.1$&  $2$  &$2.3828$ &$142$& 
             $1962$      & $2353$
    \end{tabular}
    \end{center}
\caption{Rough upper and lower bounds for Ind(${\cal W}_{\ell/n}$).  
($x_{\ell n}$, $y_{\ell n}$, $V_{min}$, and $V_{max}$ are computed using 
the value $H=1/2$.)}
\end{table}

For other surfaces we can use the same technique.  Without showing 
the resulting matrices, we provide here some other examples: 
\begin{center}
For ${\cal W}_{5/3}$ we can use $N=12$ and the functions 
$u_1$,$u_2$,$u_3$,$u_5$,...,$u_9$,$u_{15}$,$u_{16}$,$u_{17}$,$u_{29}$.  
\end{center}
\begin{center}
For ${\cal W}_{5/4}$ we can use $N=33$ and 
$u_1$,...,$u_{23}$,$u_{27}$,...,$u_{35}$,$u_{45}$.  
\end{center}
\begin{center}
For ${\cal W}_{7/4}$ we can use $N=16$ and 
$u_1$,$u_2$,$u_3$,$u_5$,...,$u_9$,$u_{14}$,...,$u_{17}$,$u_{27}
$,$u_{28}$,$u_{29}$,$u_{45}$.  
\end{center}
\begin{center}
For ${\cal W}_{6/5}$ we can use $N=20$ and 
$u_1$,...,$u_{19}$,$u_{29}$.  
\end{center}
\begin{center}
For ${\cal W}_{7/5}$ we can use $N=27$ and 
$u_1$,...,$u_{11}$,$u_{14}$,...,$u_{19}$,$u_{26}$,...,$u_{31}
$,$u_{43}$,$u_{44}$,$u_{45}$,$u_{65}$.  
\end{center}
\begin{center}
For ${\cal W}_{8/5}$ we can use $N=12$ and 
$u_1$,...,$u_7$,$u_{10}$,...,$u_{13}$,$u_{29}$.  
\end{center}
\begin{center}
For ${\cal W}_{9/5}$ we can use $N=20$ and 
$u_1$,$u_2$,$u_3$,$u_5$,...,$u_9$,$u_{14}$,...,$u_{17}$,$u_{26}
$,...,$u_{29}$,$u_{43}$,$u_{44}$,$u_{45}$,$u_{65}$.  
\end{center}
In each case, 
$\tilde{A}_N$ will be negative definite.  $\tilde{A}_N$ is 
a diagonal matrix for ${\cal W}_{3/2}$, 
${\cal W}_{5/3}$, ${\cal W}_{7/4}$, and 
${\cal W}_{9/5}$, and is not diagonal for 
${\cal W}_{4/3}$, ${\cal W}_{5/4}$, ${\cal W}_{6/5}$, 
${\cal W}_{7/5}$, and ${\cal W}_{8/5}$.  We conclude that 
\[ \mbox{Ind}({\cal W}_{5/3}) \geq 11 \; , \; \; \; 
\mbox{Ind}({\cal W}_{5/4}) \geq 32 \; , \; \; \; 
\mbox{Ind}({\cal W}_{7/4}) \geq 15 \; , \; \; \; 
\mbox{Ind}({\cal W}_{6/5}) \geq 19 \; , \] 
\[ \mbox{Ind}({\cal W}_{7/5}) \geq 26 \; , \; \; \; 
\mbox{Ind}({\cal W}_{8/5}) \geq 11 \; , \; \; \; 
\mbox{Ind}({\cal W}_{9/5}) \geq 19 \; . \] 
We could continue in this way for more ${\cal W}_{\ell/n}$, 
but many computations are required to find sharp lower 
bounds.  However, if we are interested in rougher 
lower bounds, then we may continue without 
so many computations.  For example, for each of the surfaces 
${\cal W}_{8/7}$, ${\cal W}_{10/7}$, and ${\cal W}_{12/7}$, 
we can use $N=9$ and the functions 
$u_1,...,u_5,u_{10},...,u_{13}$ to 
create a negative definite diagonal matrix $\tilde{A}_N$, thus 
\[ \mbox{Ind}({\cal W}_{8/7}) \geq 8 \; , \; \; \; 
\mbox{Ind}({\cal W}_{10/7}) \geq 8 \; , \; \; \; 
\mbox{Ind}({\cal W}_{12/7}) \geq 8 \; . \] 
From Lemma 3.1 and the above applications of Theorem 5.1, we 
conclude:

\begin{center}
{\em Ind(${\cal W}_{\ell/n}$)$\geq 7$ for all ${\cal W}_{\ell/n}$.}  
\end{center}

\begin{remark}
In fact, numerical results in section 7 suggest that 
Ind(${\cal W}_{\ell/n}$)$\geq 9$ for all ${\cal W}_{\ell/n}$.  
Since the symmetric Wente tori are possible candidates for 
minimizers of index amongst all closed unstable 
CMC surfaces, one may conjecture that, except 
for the round sphere, there does not exist a closed 
CMC surface in $\bfR^3$ with index less than $9$.  
\end{remark}

\section{A method for computing the index sharply}

In the previous section, we found finite dimensional 
subspaces on which the restricted 
operator is negative definite.  Now, we will still use finite dimensional 
subspaces, but we will no longer confine ourselves to subspaces for which 
the operator becomes negative definite.  Instead we will use the 
number of negative eigenvalues of the restricted operators on the subspaces 
to estimate the index of the surfaces from below.  
Again, as they are convenient to work with, we will use subspaces generated 
by eigenfunctions of the Laplacian.  Furthermore, we will show that if 
we choose a sequence of subspaces whose union is dense in the full space, 
then the eigenvalues of the restricted operators converge to the eigenvalues 
of the 
original operator, providing us with a numerical algorithm for estimating 
the eigenvalues sharply.  

Let $P_m$ be orthogonal projection of $L^2$ to 
${\cal V}_m := \mbox{span}\{u_1,...,
u_m\}$, the span of the first $m$ eigenfunctions $u_i$ (as ordered 
in section 2.3) of 
$-\triangle$.  Hence $P_m(u = \sum_{i=1}^\infty a_i u_i)$ $= 
\sum_{i=1}^m a_i u_i = 
\sum_{i=1}^m \langle u, u_i \rangle_{L^2} u_i $.  
Note that $P_m v_j \to_{L^2} v_j$ as $m \rightarrow \infty$ for any 
eigenfunction $v_j$ of $\cal L$.  

Let ${\cal L}_m = P_m \circ {\cal L} \circ P_m$.  Then 
\[{\cal L}_m \left( u = \sum_{i=1}^\infty a_iu_i \right) = 
P_m \circ \left( \sum_{i=1}^m a_i {\cal L}u_i \right) = 
\sum_{i,j=1}^m a_i (\alpha_i 
\delta_{ij} - b_{ij}) u_j \; , \; \; \; \mbox{where} \]
\[ b_{ij} := \int_{\bfC / \Gamma} V u_i u_j dxdy \; . 
\]  So, on ${\cal V}_m$, 
${\cal L}_m$ is the linear transformation $A_m = 
(\alpha_i \delta_{ij} - b_{ij})_{1 \leq i,j, \leq m}$ with respect to 
the basis $\{u_i\}_{i=1}^m$.  
The next lemma states that the linear map $A_m$ is the 
restriction of ${\cal L}$ to ${\cal V}_m$:  

\begin{lemma}
Let $\hat{x} = (x_1,...,x_m), \hat{y} = (y_1,...,y_m) \in \bfR^m$.  
Let ${\sf f} = \sum_{i=1}^m x_i u_i, {\sf g} = \sum_{i=1}^m y_i u_i$.  
Then 
\[ \langle \hat{x}, A_m \hat{y} \rangle_{\bfR^m} = 
\int_{\bfC/\Gamma} {\sf f}{\cal L}{\sf g} dxdy \; \; 
\mbox{ and } \; \; \langle \hat{x}, \hat{y} \rangle_{\bfR^m} = 
\int_{\bfC/\Gamma} {\sf f}{\sf g} dxdy \; . \] 
\end{lemma}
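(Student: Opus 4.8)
The plan is to verify both identities by a direct expansion, using only three facts already in hand: that $\{u_i\}$ is an $L^2$-orthonormal system, so that $\int_{\bfC/\Gamma} u_i u_j\, dxdy = \delta_{ij}$; that each $u_i$ is an eigenfunction of the Laplacian, $-\triangle u_i = \alpha_i u_i$; and the definitions ${\cal L} = -\triangle - V$, $b_{ij} = \int_{\bfC/\Gamma} V u_i u_j\, dxdy$, and $A_m = (\alpha_i \delta_{ij} - b_{ij})_{1 \leq i,j \leq m}$. Both identities reduce to collapsing a finite double sum against orthonormality, so the argument is short and computational.

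For the second (easier) identity, I would write ${\sf f}{\sf g} = \sum_{i,j=1}^m x_i y_j\, u_i u_j$, integrate term by term — legitimate since the sum is finite — and apply orthonormality:
\[ \int_{\bfC/\Gamma} {\sf f}{\sf g}\, dxdy = \sum_{i,j=1}^m x_i y_j \int_{\bfC/\Gamma} u_i u_j\, dxdy = \sum_{i,j=1}^m x_i y_j \delta_{ij} = \sum_{i=1}^m x_i y_i = \langle \hat{x}, \hat{y} \rangle_{\bfR^m} \; . \]

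For the first identity, I would first compute ${\cal L}{\sf g} = \sum_{j=1}^m y_j {\cal L} u_j = \sum_{j=1}^m y_j(-\triangle u_j - V u_j) = \sum_{j=1}^m y_j(\alpha_j u_j - V u_j)$, then multiply by ${\sf f} = \sum_{i=1}^m x_i u_i$ and integrate:
\[ \int_{\bfC/\Gamma} {\sf f}{\cal L}{\sf g}\, dxdy = \sum_{i,j=1}^m x_i y_j \left( \alpha_j \int_{\bfC/\Gamma} u_i u_j\, dxdy - \int_{\bfC/\Gamma} V u_i u_j\, dxdy \right) = \sum_{i,j=1}^m x_i y_j (\alpha_j \delta_{ij} - b_{ij}) \; . \]
Since $\alpha_j \delta_{ij} = \alpha_i \delta_{ij}$ for every pair $i,j$, the coefficient $\alpha_j\delta_{ij} - b_{ij}$ equals the $(i,j)$ entry of $A_m$, so the right-hand side is $\sum_{i,j=1}^m x_i (A_m)_{ij} y_j = \langle \hat{x}, A_m \hat{y} \rangle_{\bfR^m}$, as claimed.

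There is essentially no obstacle: the only care needed is the (trivial) justification of moving the finite sums through the integral and the index bookkeeping that lets one rewrite $\alpha_j\delta_{ij}$ as $\alpha_i\delta_{ij}$ so as to recognize the matrix $A_m = (\alpha_i\delta_{ij}-b_{ij})$ from its definition. I would also remark in passing that, since $b_{ij} = b_{ji}$, the matrix $A_m$ is symmetric, which matches the fact that ${\cal L}_m = P_m \circ {\cal L} \circ P_m$ is self-adjoint on ${\cal V}_m$; this is the property that will later let us speak of the eigenvalues of $A_m$ and compare them, via the minimax characterization (2.3), with the $\beta_j$.
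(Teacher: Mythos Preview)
Your proof is correct and follows exactly the same route as the paper's own proof, which is a one-line expansion using bilinearity, the eigenfunction relation $-\triangle u_j = \alpha_j u_j$, orthonormality, and the definition of $b_{ij}$. You have simply written out the intermediate steps in more detail; the added remark about symmetry of $A_m$ is true and relevant to what follows, though the paper does not include it here.
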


\begin{proof}
$\int_{\bfC/\Gamma} {\sf f}{\cal L}{\sf g} dxdy = 
\sum_{i,j=1}^m x_iy_j (\alpha_i \delta_{ij} - b_{ij}) = 
\langle \bar{x}, A_m \bar{y} \rangle_{\bfR^m}$.  
Similarly, $\langle \hat{x}, 
\hat{y} \rangle_{\bfR^m} = \int_{\bfC/\Gamma} {\sf f}{\sf g} dxdy$.  
\end{proof}

Let $\lambda_1^{(m)} \leq ... \leq \lambda_m^{(m)}$ be the 
eigenvalues of $A_m$ with corresponding 
orthonormal eigenvectors $w_1^{(m)},...,
w_m^{(m)}$.  Note that since $A_m$ is a symmetric matrix, 
$\lambda_i^{(m)} \in \bfR$ for all $i$. 
The following lemma is known (\cite{RS}, Theorem VIII.3, or \cite{FS}, 
equation (23) in Chapter 6), but 
we include a brief proof here.  

\begin{lemma}
$\lambda_j^{(m)} \geq \lambda_j^{(m^\prime)} \geq \beta_j$ for 
all $j$, $m$, $m^\prime$ such that $j \leq m \leq m^\prime$.  
\end{lemma}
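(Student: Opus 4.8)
The plan is to combine the min-max (Courant--Fischer) characterization of the eigenvalues of the symmetric matrices $A_m$ with Lemma 6.2 and the variational formula (2.3). As the starting point I would record that for a symmetric $m \times m$ matrix $A_m$,
\[ \lambda_j^{(m)} = \inf_{S} \; \sup_{\hat{x} \in S, \; |\hat{x}|_{\bfR^m} = 1} \; \langle \hat{x}, A_m \hat{x} \rangle_{\bfR^m} \; , \]
the infimum taken over all $j$-dimensional subspaces $S$ of $\bfR^m$. This requires $j \leq m$, which is exactly where the hypothesis $j \leq m \leq m'$ enters: it guarantees that $\lambda_j^{(m)}$ and $\lambda_j^{(m')}$ are defined in the first place.

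For the inequality $\lambda_j^{(m)} \geq \lambda_j^{(m')}$ I would observe that the entries $\alpha_i \delta_{ij} - b_{ij}$ of $A_m$, with $b_{ij} = \int_{\bfC/\Gamma} V u_i u_j \, dxdy$, do not depend on the ambient dimension, so $A_m$ is precisely the top-left $m \times m$ block of $A_{m'}$. Hence, under the inclusion $\bfR^m \hookrightarrow \bfR^{m'}$ that appends zeros, any $j$-dimensional subspace of $\bfR^m$ is also a $j$-dimensional subspace of $\bfR^{m'}$, and $\langle \hat{x}, A_{m'} \hat{x} \rangle_{\bfR^{m'}} = \langle \hat{x}, A_m \hat{x} \rangle_{\bfR^m}$ for $\hat{x}$ in it. The infimum defining $\lambda_j^{(m')}$ therefore runs over a larger family of subspaces than the one defining $\lambda_j^{(m)}$, which gives $\lambda_j^{(m')} \leq \lambda_j^{(m)}$.

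For $\lambda_j^{(m')} \geq \beta_j$ I would use Lemma 6.2 to transport each $j$-dimensional subspace $S \subset \bfR^{m'}$ to the $j$-dimensional subspace ${\cal V}_j := \{ \sum_{i=1}^{m'} x_i u_i \; : \; (x_1,\dots,x_{m'}) \in S \} \subset C^\infty(\bfC/\Gamma)$; the assignment $\hat{x} \mapsto {\sf f} = \sum x_i u_i$ is a linear isomorphism onto ${\cal V}_j$ because the $u_i$ are orthonormal, hence independent. Lemma 6.2 says this map is an isometry $(\bfR^{m'}, \langle\cdot,\cdot\rangle_{\bfR^{m'}}) \to (L^2, \langle\cdot,\cdot\rangle_{L^2})$ intertwining the quadratic form of $A_{m'}$ with $\phi \mapsto \int_{\bfC/\Gamma} \phi {\cal L}\phi \, dxdy$, so
\[ \sup_{\hat{x} \in S, \; |\hat{x}|_{\bfR^{m'}}=1} \langle \hat{x}, A_{m'} \hat{x} \rangle_{\bfR^{m'}} \; = \; \sup_{\phi \in {\cal V}_j, \; \|\phi\|_{L^2}=1} \int_{\bfC/\Gamma} \phi {\cal L}\phi \, dxdy \; \geq \; \beta_j \; , \]
where the last step is (2.3) applied to the single $j$-dimensional subspace ${\cal V}_j$ of $C^\infty(\bfC/\Gamma)$. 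Taking the infimum over $S$ yields $\lambda_j^{(m')} \geq \beta_j$.

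I do not expect a genuine obstacle; the one point to handle carefully is that not every $j$-dimensional subspace of $C^\infty(\bfC/\Gamma)$ is of the form ${\cal V}_j$ above, but this costs nothing since (2.3) is an infimum and only the one-sided bound is needed. A secondary check is that the non-monotone ordering of the $u_i$ in Section 2.3 is irrelevant here: $A_m$ remains the top-left block of $A_{m'}$ regardless of that ordering.
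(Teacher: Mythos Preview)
Your argument is correct and follows essentially the same route as the paper's: both invoke the Courant--Fischer variational principle and exploit the nesting ${\cal V}_m \subset {\cal V}_{m'} \subset L^2$ to obtain the two inequalities. The only differences are cosmetic---the paper uses the dual sup--inf form and works directly in the function spaces ${\cal V}_m$ (observing $\int \psi {\cal L}_m \psi = \int \psi {\cal L}\psi$ for $\psi \in {\cal V}_m$), while you use the inf--sup form on $\bfR^m$ and then translate via the isometry of Lemma~6.1 (you wrote ``Lemma~6.2'', but that is the statement you are proving; the identity $\langle \hat{x}, A_m \hat{y}\rangle_{\bfR^m} = \int {\sf f}{\cal L}{\sf g}\,dxdy$ you need is Lemma~6.1).
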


\begin{proof}
We have the following variational characterization:
\[
\lambda_j^{(m)} = \sup_{\phi_1,...,\phi_{j-1} \in {\cal V}_m} 
\left( \inf_{\psi \in {\cal V}_m \cap 
(\mbox{span}\{\phi_1,...,\phi_{j-1}\})^\perp, || \psi ||_{L^2}=1 } 
\int_{\bfC/\Gamma} \psi {\cal L}_m \psi dxdy \right)
\]
\[
 = \sup_{\phi_1,...,\phi_{j-1} \in L^2} 
\left( \inf_{\psi \in {\cal V}_m \cap 
(\mbox{span}\{P_m(\phi_1),...,P_m(\phi_{j-1})\})^\perp, 
|| \psi ||_{L^2}=1 } \int_{\bfC/\Gamma} 
\psi (P_m \circ {\cal L} \circ P_m (\psi)) dxdy \right) \; .
\]
However, for $\psi \in {\cal V}_m$ we have
$\int_{\bfC/\Gamma} \psi (P_m \circ {\cal L} \circ 
P_m (\psi)) dxdy = 
\int_{\bfC/\Gamma} \psi {\cal L} \psi dxdy$, so 
\[
\lambda_j^{(m)} = \sup_{\phi_1,...,\phi_{j-1} \in L^2} 
\left( \inf_{\psi \in {\cal V}_m \cap 
(\mbox{span}\{\phi_1,...,\phi_{j-1}\})^\perp, || \psi ||_{L^2}=1 } 
\int_{\bfC/\Gamma} \psi {\cal L} \psi dxdy \right) \; .
\] So $\lambda_j^{(m)}$ is 
nonincreasing as $m$ increases.  Also, $\beta_j$ has the following 
characterization \cite{U}:
\[ \beta_j = \sup_{\phi_1,...,\phi_{j-1} \in L^2} 
\left( \inf_{\psi \in L^2 \cap 
(\mbox{span}\{\phi_1,...,\phi_{j-1}\})^\perp, || \psi ||_{L^2}=1 } 
\int_{\bfC/\Gamma} \psi {\cal L} \psi dxdy \right) \; .
\] Hence $\lambda_j^{(m)} \geq \beta_j$ for all $m$.  
\end{proof}

We define 
\[ \gamma_j := \lim_{m \to \infty} \lambda_j^{(m)} \; . \] 
This limit clearly exists, since 
$\{ \lambda_j^{(m)} \}_{m=j}^\infty$ is 
nonincreasing and bounded below by $\beta_j$.  

\begin{theorem}
$\gamma_j = \beta_j$ for all $j$.  
\end{theorem}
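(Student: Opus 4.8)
The plan is to prove the single inequality $\gamma_j \le \beta_j$; combined with Lemma 6.2 (which gives $\lambda_j^{(m)} \ge \beta_j$ for all $m \ge j$, hence $\gamma_j \ge \beta_j$) this yields the equality. The starting point is a function‑space reformulation of the numbers $\lambda_j^{(m)}$. Since $A_m$ is the matrix of the symmetric operator ${\cal L}_m$ in the orthonormal basis $u_1,\dots,u_m$ of ${\cal V}_m$, the Courant--Fischer min--max principle together with Lemma 6.1 (applied to both numerator and denominator of each Rayleigh quotient) gives, for every $j \le m$,
\[
\lambda_j^{(m)} = \inf_{\substack{U \subset {\cal V}_m \\ \dim U = j}} \; \sup_{f \in U \setminus \{0\}} \frac{\int_{\bfC/\Gamma} f\,{\cal L}f\,dxdy}{\|f\|_{L^2}^2} \; .
\]
So it suffices to produce, for each large $m$, one $j$‑dimensional subspace $U = U_m \subset {\cal V}_m$ whose Rayleigh‑quotient supremum tends to $\beta_j$ as $m \to \infty$.

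The natural choice is $U_m := \mathrm{span}\{P_m v_1,\dots,P_m v_j\} \subset {\cal V}_m$, where $v_1,\dots,v_j$ are the first $j$ eigenfunctions of ${\cal L}$. The key analytic point is to upgrade the convergence $P_m v_i \to_{L^2} v_i$ noted in the paper to convergence in $H^1(\bfC/\Gamma)$. Writing $v_i = \sum_k c_k u_k$ with $c_k = \langle v_i, u_k\rangle_{L^2}$, and using that the $u_k$ are $-\triangle$‑eigenfunctions, so that $\int_{\bfC/\Gamma} \nabla u_k \cdot \nabla u_{k'}\,dxdy = \alpha_k \delta_{kk'}$, we get
\[
\|\nabla (v_i - P_m v_i)\|_{L^2}^2 = \sum_{k > m} \alpha_k c_k^2 \; ,
\]
a tail of the convergent nonnegative series $\sum_k \alpha_k c_k^2 = \|\nabla v_i\|_{L^2}^2 < \infty$ (finite because $v_i$ is smooth, indeed $v_i\in H^1$), hence it tends to $0$, so $P_m v_i \to v_i$ in $H^1$. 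Two consequences follow. First, the Gram matrices $G_m = \big(\langle P_m v_i, P_m v_{k}\rangle_{L^2}\big)_{1 \le i,k \le j}$ converge to the identity $I_j$, so for $m$ large $U_m$ is genuinely $j$‑dimensional. Second, the bilinear form $B(f,g) := \int_{\bfC/\Gamma}(\nabla f \cdot \nabla g - V fg)\,dxdy$ is bounded on $H^1 \times H^1$ (as $V$ is bounded), and $B(v_i, v_k) = \int_{\bfC/\Gamma} v_i\,{\cal L}v_k\,dxdy = \beta_k \delta_{ik}$, so the matrices $B_m := \big(B(P_m v_i, P_m v_k)\big)_{1 \le i,k \le j}$ converge to $\mathrm{diag}(\beta_1,\dots,\beta_j)$. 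Note that $B(f,f) = \int_{\bfC/\Gamma} f\,{\cal L}f\,dxdy$ for $f \in {\cal V}_m$.

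Finally, for $f = \sum_{i=1}^j a_i P_m v_i \in U_m$ with $a = (a_1,\dots,a_j) \in \bfR^j$, we have $\int_{\bfC/\Gamma} f\,{\cal L}f\,dxdy = \langle a, B_m a\rangle_{\bfR^j}$ and $\|f\|_{L^2}^2 = \langle a, G_m a\rangle_{\bfR^j}$, so the supremum of $\|f\|_{L^2}^{-2}\int_{\bfC/\Gamma} f{\cal L}f\,dxdy$ over $f \in U_m \setminus \{0\}$ equals the largest $\lambda$ with $\det(B_m - \lambda G_m) = 0$, i.e.\ the largest eigenvalue of the symmetric matrix $G_m^{-1/2} B_m G_m^{-1/2}$ (well defined for $m$ large since $G_m \to I_j$). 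Since $G_m^{-1/2} B_m G_m^{-1/2} \to \mathrm{diag}(\beta_1,\dots,\beta_j)$, continuity of eigenvalues gives that this supremum tends to $\beta_j$. Combining with the min--max formula, $\lambda_j^{(m)}$ is bounded above by a quantity tending to $\beta_j$, hence $\gamma_j = \lim_m \lambda_j^{(m)} \le \beta_j$, which completes the proof. The one genuine obstacle is the $H^1$‑convergence $P_m v_i \to v_i$: because ${\cal L}$ is unbounded, $L^2$‑convergence alone does not permit passing to the limit inside $\int f\,{\cal L}f$, and it is the regularity of the eigenfunctions $v_i$ that controls the high‑frequency tail $\sum_{k>m}\alpha_k c_k^2$. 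Everything else is the classical Rayleigh--Ritz (Galerkin) convergence argument.
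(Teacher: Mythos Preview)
Your argument is correct and is the classical Rayleigh--Ritz (Galerkin) convergence proof, but it is organized quite differently from the paper's. The paper expands each $P_m v_j$ in the eigenbasis $w_1^{(m)},\dots,w_m^{(m)}$ of $A_m$, tracks the coefficients $a_{i,j}^{(m)}$, and runs an induction over the blocks $\beta_{k_i+1}=\dots=\beta_{k_{i+1}}$ of equal eigenvalues; along the way it proves the additional fact that $P_{k_i}^m v_j \to_{L^2} v_j$, i.e.\ a form of eigenvector convergence. Your route bypasses all of this: you plug the single test space $U_m=\mathrm{span}\{P_m v_1,\dots,P_m v_j\}$ into the min--max for $\lambda_j^{(m)}$ and reduce everything to convergence of the $j\times j$ Gram and form matrices $G_m\to I_j$, $B_m\to\mathrm{diag}(\beta_1,\dots,\beta_j)$. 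The shared analytic core is the same---your $H^1$ convergence $\sum_{k>m}\alpha_k c_k^2\to 0$ is exactly the Laplacian half of the paper's Lemma~6.3, and the $V$ half follows from $L^2$ convergence and boundedness of $V$ in both treatments---but your packaging is shorter, handles multiplicities automatically (no special induction needed when several $\beta_j$ coincide), and makes the role of the regularity of the $v_j$ completely transparent. What the paper's longer argument buys is the eigenvector convergence statement $P_{k_{i}}^m v_j\to_{L^2} v_j$, which you do not obtain and which the theorem does not require.
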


\begin{remark}
The motivation for this theorem is that it 
gives us a method for estimating the eigenvalues $\beta_j$ of 
${\cal L}$, since one can estimate $\gamma_j$ numerically.  Thus 
one can estimate the number of negative eigenvalues $\beta_j < 0$, 
and hence the index (see section 7).  
\end{remark}

Before proving Theorem 6.1, we prove a 
crucial preliminary lemma, which essentially 
says that the Rayleigh quotient 
\[ \frac{\int_{\bfC/\Gamma} 
(P_m v_j) {\cal L}(P_m v_j) dxdy}
{\int_{\bfC/\Gamma} (P_m v_j)^2 dxdy} \; \; \; \mbox{ of } 
P_m v_j \mbox{ converges to the} \]
\[ \mbox{Rayleigh quotient } 
\frac{\int_{\bfC/\Gamma} v_j {\cal L} v_j dxdy}
{\int_{\bfC/\Gamma} (v_j)^2 dxdy} = \beta_j \; \; \; 
\mbox{ of } v_j 
\mbox{ as } m \to \infty, \mbox{ for all } j \; . \]

\begin{lemma}
$\int_{\bfC/\Gamma} (P_m v_j) {\cal L}(P_m v_j) dxdy \to 
\int_{\bfC/\Gamma} v_j {\cal L} v_j dxdy = \beta_j$ as 
$m \to \infty$ for all $j$.  
\end{lemma}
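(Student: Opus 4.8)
The plan is to show that $P_m v_j \to_{L^2} v_j$ carries enough regularity that $\mathcal{L}(P_m v_j) \to_{L^2} \mathcal{L}(v_j)$, and then combine this with the $L^2$-convergence of $P_m v_j$ to pass to the limit in the inner product $\int_{\bfC/\Gamma} (P_m v_j)\mathcal{L}(P_m v_j)\,dxdy = \langle P_m v_j, \mathcal{L}(P_m v_j)\rangle_{L^2}$. The subtlety is that $\mathcal{L} = -\triangle - V$ is an unbounded operator, so mere $L^2$-convergence of $P_m v_j$ is not enough to conclude convergence of $\mathcal{L}(P_m v_j)$; one needs control on derivatives. Fortunately, $v_j$ is a smooth eigenfunction ($\mathcal{L}v_j = \beta_j v_j$), so all the needed regularity is present — the point is to exploit it correctly.

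First I would write $P_m v_j = \sum_{i=1}^m \langle v_j, u_i\rangle_{L^2}\, u_i$, so that, since $-\triangle u_i = \alpha_i u_i$, we get $-\triangle(P_m v_j) = \sum_{i=1}^m \alpha_i \langle v_j, u_i\rangle_{L^2}\, u_i = P_m(-\triangle v_j)$. That is, $-\triangle$ commutes with $P_m$ on smooth functions, because $\langle v_j, u_i\rangle_{L^2}$ times $\alpha_i$ equals $\langle -\triangle v_j, u_i\rangle_{L^2}$ by self-adjointness of $\triangle$ (integration by parts on the closed torus, no boundary terms). Since $v_j \in C^\infty(\bfC/\Gamma)$, the function $-\triangle v_j$ is also in $L^2$, hence $P_m(-\triangle v_j) \to_{L^2} -\triangle v_j$ as $m \to \infty$ by the standard completeness of the eigenbasis $\{u_i\}$. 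Next, since $V = 4H\cosh F$ is a bounded smooth function, multiplication by $V$ is a bounded operator on $L^2$, so from $P_m v_j \to_{L^2} v_j$ we get $V \cdot (P_m v_j) \to_{L^2} V \cdot v_j$. Adding, $\mathcal{L}(P_m v_j) = -\triangle(P_m v_j) - V\cdot(P_m v_j) = P_m(-\triangle v_j) - V\cdot(P_m v_j) \to_{L^2} -\triangle v_j - V v_j = \mathcal{L}(v_j)$.

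Finally I would combine the two convergences: $P_m v_j \to_{L^2} v_j$ and $\mathcal{L}(P_m v_j) \to_{L^2} \mathcal{L}(v_j)$. Since the $L^2$ inner product is continuous in each argument and a product of two $L^2$-convergent sequences converges (bound one factor's norm, which is convergent hence bounded), we obtain
\[
\int_{\bfC/\Gamma} (P_m v_j)\,\mathcal{L}(P_m v_j)\,dxdy = \langle P_m v_j, \mathcal{L}(P_m v_j)\rangle_{L^2} \longrightarrow \langle v_j, \mathcal{L}(v_j)\rangle_{L^2} = \int_{\bfC/\Gamma} v_j\,\mathcal{L}(v_j)\,dxdy,
\]
and the last integral equals $\beta_j \int_{\bfC/\Gamma} v_j^2\,dxdy = \beta_j$ because $\mathcal{L}v_j = \beta_j v_j$ and $\|v_j\|_{L^2} = 1$.

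**The main obstacle** is the commutation step $-\triangle(P_m v_j) = P_m(-\triangle v_j)$ together with ensuring $P_m(-\triangle v_j) \to_{L^2} -\triangle v_j$: this is where the smoothness of $v_j$ is essential, and it must be invoked explicitly — the identity would fail for a generic $L^2$ function, and it is really the only place where "$P_m$ commutes with $\mathcal{L}$ along smooth directions" is used. Everything else (boundedness of $V$, continuity of the inner product) is routine.
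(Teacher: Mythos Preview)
Your proof is correct. Both your argument and the paper's split $\mathcal{L} = -\triangle - V$ and handle the two pieces separately, using the eigenfunction expansion for the Laplacian part and boundedness of $V$ for the potential part. The difference is in packaging: you establish the stronger operator-level convergence $\mathcal{L}(P_m v_j) \to_{L^2} \mathcal{L}(v_j)$ --- via the commutation $-\triangle(P_m v_j) = P_m(-\triangle v_j)$ together with continuity of multiplication by the bounded function $V$ on $L^2$ --- and only then pass to the inner product. The paper instead works directly with the quadratic form: it computes $\langle -\triangle P_m v_j, P_m v_j\rangle_{L^2} = \sum_{i=1}^m \alpha_i \langle v_j, u_i\rangle_{L^2}^2$ and lets this monotone partial sum converge, and for the potential term it shows $\int_{\bfC/\Gamma} |(P_m v_j)^2 - v_j^2|\,dxdy \to 0$ by a sign-splitting and Cauchy--Schwarz estimate. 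Your route is a bit more conceptual and makes the potential term almost trivial; the paper's route stays entirely at the level of scalar quantities and never needs the intermediate fact that $\mathcal{L}(P_m v_j)$ converges in $L^2$.
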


\begin{proof}
$\langle - \triangle v_j,v_j 
\rangle_{L^2}$ $=$ $\sum_{i,l=1}^\infty$ $\langle 
-\triangle (
\langle v_j,u_i  \rangle_{L^2} u_i), \langle v_j,u_l 
\rangle_{L^2} u_l \rangle_{L^2}$ $= 
\sum_{i=1}^\infty 
\langle v_j,u_i \rangle_{L^2}^2 \alpha_i$ $< \infty$.  
Similarly, 
$\langle -\triangle P_m v_j, P_m v_j \rangle_{L^2} = 
\sum_{i=1}^m \langle v_j,u_i \rangle_{L^2}^2 \alpha_i$, 
hence 
\[ \int_{\bfC/\Gamma} (P_m v_j) (-\triangle P_m v_j) dxdy = 
\sum_{i=1}^m \langle v_j,u_i \rangle_{L^2}^2 \alpha_i 
\to \sum_{i=1}^\infty 
\langle v_j,u_i \rangle_{L^2}^2 \alpha_i = 
\int_{\bfC/\Gamma} v_j (-\triangle v_j) dxdy \] 
as $m \to \infty$. 

We complete the proof by showing 
$\int_{\bfC / \Gamma} V (P_m v_j)^2 dxdy \to 
\int_{\bfC / \Gamma} V v_j^2 dxdy$ as $m \to \infty$.  
Let $M_m^+ = \{ [x,y] \in \bfC / \Gamma \, 
| \, (P_m v_j)^2 - 
v_j^2 \geq 0 \}$, and let 
$M_m^- = \{ [x,y] \in \bfC / \Gamma \, | \, (P_m 
v_j)^2 - v_j^2 < 0 \}$.  
\[ \int_{\bfC / \Gamma} (P_m v_j - v_j)^2 dxdy \to 0 
\Rightarrow 
\int_{M_m^+} (P_m v_j - v_j)^2 dxdy \to 0 \mbox{ and } 
\int_{M_m^-} (P_m v_j - v_j)^2 dxdy \to 0 \] as 
$m \to \infty$, so 
for $M_m^*$ equal to either $M_m^+$ or 
$M_m^-$, we have
\[ \int_{M_m^*} \hspace{-0.04in} |(P_m v_j)^2 - v_j^2| dxdy = 
 \hspace{-0.04in} 
\left| \int_{M_m^*} ((P_m v_j)^2 
\hspace{-0.04in} -  \hspace{-0.04in} v_j^2) dxdy \right| = 
 \hspace{-0.04in} 
\left| \int_{M_m^*} (P_m v_j \hspace{-0.04in} - \hspace{-0.04in} 
v_j)^2 \hspace{-0.04in} + \hspace{-0.04in} 2v_j (P_mv_j 
\hspace{-0.04in} - \hspace{-0.04in} v_j) dxdy \right| 
\]\[ \leq \int_{M_m^*} (P_m v_j - v_j)^2 dxdy + 2 || 
v_j ||_{L^2(\bfC/\Gamma)} 
|| P_m v_j - v_j ||_{L^2(\bfC/\Gamma)} \to 0 \mbox{ as } m \to 
\infty \] by the Cauchy-Schwarz inequality.  Hence 
\[ \left| \int_{\bfC/\Gamma} V((P_m v_j)^2 - v_j^2) dxdy 
\right| \leq 
 V_{max} \int_{\bfC/\Gamma} | (P_m v_j)^2 - v_j^2 | dxdy = 
\] \[ V_{max} \int_{M_m^+} ((P_m v_j)^2 - v_j^2) dxdy \; \; - 
\; \; V_{max} \int_{M_m^-} 
((P_m v_j)^2 - v_j^2) dxdy \; \; \; \to 0 
\mbox{ as } m \to \infty \; . \]
\end{proof}

\begin{proofspec}
Since $w_i^{(m)}$ is a linear combination of $u_j$ for 
$j \leq m$, we may consider $w_i^{(m)}$ as a function in 
$C^\infty(\bfC/\Gamma)$.  Then 
$\int_{\bfC/\Gamma} w_\mu^{(m)} {\cal L}w_\nu^{(m)} dxdy = 
\langle w_\mu^{(m)} , A_m w_\nu^{(m)} \rangle_{\bfR^m} = 
\lambda_\nu^{(m)} \delta_{\mu\nu}$ and 
$\int_{\bfC/\Gamma} w_\mu^{(m)} w_\nu^{(m)} dxdy = 
\langle w_\mu^{(m)} , w_\nu^{(m)} \rangle_{\bfR^m} = 
\delta_{\mu\nu}$, by Lemma 6.1.  Choose 
$a_{i,j}^{(m)} \in \bfR$ so that 
$P_m v_j = \sum_{i=1}^m a_{i,j}^{(m)} 
w_i^{(m)}$ for all $m$ and $j$, then 
\begin{equation}
\int_{\bfC/\Gamma} 
(P_mv_j) {\cal L}(P_mv_j) dxdy = \sum_{i=1}^m (a_{i,j}^{(m)})^2 
\lambda_i^{(m)} \to \beta_j 
\end{equation}
by Lemma 6.3, and 
\begin{equation} 
\langle P_mv_j , P_mv_j \rangle = 
\sum_{i=1}^m (a_{i,j}^{(m)})^2 \to 1  
\end{equation}
for all $j \in \bfZ^+$ as $m \to \infty$.  

Let $P_l^m$ be the orthogonal projection from $L^2$ to 
span$\{w_1^{(m)},...,w_l^{(m)}\}$.  

Choose 
$k_1,k_2,...$ such that $\beta_1 = ... = \beta_{k_1} 
< \beta_{k_1+1} = ... = \beta_{k_2} < \beta_{k_2+1} 
= ... = \beta_{k_3} < \beta_{k_3+1} = ... \, $.  Note that 
$k_1 = 1$, since Courant's nodal domain theorem implies 
that any 
function in the eigenspace of the first eigenvalue has only 
one nodal domain, hence this eigenspace cannot contain 
two $L^2$-orthogonal functions.  

We will prove the result by induction on $i$, the subscript 
of $k_i$, for $i \in \bfZ^+$.  

{\bf Base case: $\gamma_1 = \beta_1$, and $P_{1}^m v_1 
\to_{L^2} v_1$ as $m \to \infty$.}  

We have 
\[ 0 \leq (a_{1,1}^{(m)})^2 (\gamma_1-\beta_1) + 
(\sum_{\rho=2}^m (a_{\rho,1}^{(m)})^2) (\beta_2-\beta_1) 
\]\[
\leq (a_{1,1}^{(m)})^2 (\gamma_1-\beta_1) + 
((\sum_{\rho=1}^m (a_{\rho,1}^{(m)})^2) -1)\beta_1 + 
|(\sum_{\rho=1}^m (a_{\rho,1}^{(m)})^2) -1| \cdot |\beta_1| + 
(\sum_{\rho=2}^m (a_{\rho,1}^{(m)})^2) (\beta_{2}-\beta_1) 
\]\[
= (a_{1,1}^{(m)})^2 \gamma_1 - \beta_1 + 
(\sum_{\rho=2}^m (a_{\rho,1}^{(m)})^2) \beta_{2} + 
|(\sum_{\rho=1}^m (a_{\rho,1}^{(m)})^2) -1| \cdot |\beta_1|
\]\[
\leq (\sum_{\rho=1}^m (a_{\rho,1}^{(m)})^2 \lambda_\rho^{(m)}) - \beta_1 + 
|(\sum_{\rho=1}^m (a_{\rho,1}^{(m)})^2) -1| \cdot |\beta_1|
\to 0 \] as $m \to \infty$, by 
equations (6.4) and (6.5).  Therefore, since 
$\beta_{2} - \beta_1 > 0$, we have 
\[ || P_{1}^m v_1 
- P_mv_1 ||_{L^2}^2 = \sum_{\rho=2}^m (a_{\rho,1}^{(m)})^2 \to 0 \; , 
\]\[ (a_{1,1}^{(m)})^2 \to 1 \; , \mbox{ and } 
(a_{1,1}^{(m)})^2 (\gamma_1-\beta_1) \to 0 \mbox{ as } 
m \to \infty \; . \]
Thus $\gamma_1 = \beta_1$, and 
$|| P_{1}^m v_1 - v_1 ||_{L^2} \leq 
|| P_{1}^m v_1 - P_mv_1 ||_{L^2} + 
|| P_m v_1 - v_1 ||_{L^2} \to 0$ as $m \to \infty$.  

{\bf Inductive step: If 
$\gamma_j = \beta_j$ and $P_{k_i}^m v_j 
\to_{L^2} v_j$ as $m \to \infty$ for all $j \leq k_i$, then 
$\gamma_j = \beta_j$ and $P_{k_{i+1}}^m v_j 
\to_{L^2} v_j$ as $m \to \infty$ for all $j \leq k_{i+1}$.}  

Since $P_{k_i}^mv_j \to_{L^2} v_j$ as $m \to 
\infty$ for all $j \leq k_i$, we have 
$P_{k_i}^mv_j \to_{L^2} 0$ for all 
$j \in [k_i+1,k_{i+1}]$ as $m \to 
\infty$ (since the $v_j$ are orthonormal), so 
\begin{equation} 
a_{l,j}^{(m)} \to 0 \mbox{ for all } 
l \leq k_i \mbox{ and all } j \in [k_i+1,k_{i+1}] \; . 
\end{equation}
Then for all $j \in [k_i+1,k_{i+1}]$, we have 
\[ 0 \leq (\sum_{\rho={k_i+1}}^{k_{i+1}} (a_{\rho,j}^{(m)})^2 
(\gamma_{\rho}-\beta_{\rho}) ) + 
(\sum_{\rho={k_{i+1}+1}}^{m} (a_{\rho,j}^{(m)})^2) 
(\beta_{k_{i+1}+1}-\beta_{k_i+1}) \]\[
\leq (\sum_{\rho={1}}^{k_{i+1}} (a_{\rho,j}^{(m)})^2 (\gamma_\rho-
\beta_\rho)) + 
(\sum_{\rho={k_{i+1}+1}}^{m} (a_{\rho,j}^{(m)})^2)
(\beta_{k_{i+1}+1}-\beta_{k_i+1}) + \]\[ 
(\sum_{\rho=1}^{k_i}(a_{\rho,j}^{(m)})^2 \beta_\rho) + 
((\sum_{\rho=k_i+1}^{m} (a_{\rho,j}^{(m)})^2) - 1)\beta_{k_i+1} + 
(\sum_{\rho=1}^{k_i} (a_{\rho,j}^{(m)})^2 |\beta_\rho|) + 
\]\[ |(\sum_{\rho=k_i+1}^{m} (a_{\rho,j}^{(m)})^2) -1| \cdot |\beta_{k_i+1}| 
= (\sum_{\rho=1}^{k_{i+1}} (a_{\rho,j}^{(m)})^2 \gamma_\rho)-\beta_{k_i+1} + 
\]\[ (\sum_{\rho=k_{i+1}+1}^{m} (a_{\rho,j}^{(m)})^2) \beta_{k_{i+1}+1} + 
(\sum_{\rho=1}^{k_i} (a_{\rho,j}^{(m)})^2 |\beta_\rho|) + 
\]\[ |(\sum_{\rho=k_i+1}^{m} (a_{\rho,j}^{(m)})^2) -1| \cdot |\beta_{k_i+1}| 
\leq (\sum_{\rho=1}^{m} (a_{\rho,j}^{(m)})^2 \lambda_\rho^{(m)}) - 
\beta_{k_i+1} + \]\[ 
(\sum_{\rho=1}^{k_i} (a_{\rho,j}^{(m)})^2 |\beta_\rho|) + 
|(\sum_{\rho=k_i+1}^{m} (a_{\rho,j}^{(m)})^2) -1| \cdot |\beta_{k_i+1}| 
\to 0 \] as $m \to \infty$, by (6.4), (6.5), and 
(6.6).  Therefore, since 
$\beta_{k_{i+1}+1} - \beta_{k_i+1} > 0$, we have 
\begin{equation}
\sum_{\rho=k_i+1}^{k_{i+1}} (a_{\rho,j}^{(m)})^2 (\gamma_{\rho}-\beta_{\rho}) 
\to 0 
\end{equation} and 
\begin{equation} 
|| P_{k_{i+1}}^m v_j - P_m v_j ||_{L^2}^2 = 
\sum_{\rho=k_{i+1}+1}^{m} (a_{\rho,j}^{(m)})^2 \to 0 
\end{equation} as $m \to \infty$.  

By (6.8) we have 
$|| P_{k_{i+1}}^m v_j - v_j ||_{L^2} \leq 
|| P_{k_{i+1}}^m v_j - P_m v_j ||_{L^2} + 
|| P_m v_j - v_j ||_{L^2} \to 0$ as $m \to \infty$, 
hence $P_{k_{i+1}}^m v_j \to_{L^2} v_j$ as 
$m \to \infty$ for all $j \in [k_i+1,k_{i+1}]$.  
Since, by the inductive assumption, 
$P_{k_{i}}^m v_j \to_{L^2} v_j$ as 
$m \to \infty$ for all $j \leq k_{i}$, it 
follows that also 
$P_{k_{i+1}}^m v_j \to_{L^2} v_j$ as 
$m \to \infty$ for all $j \leq k_i$.  
Hence $P_{k_{i+1}}^m v_j\to_{L^2} v_j$ as 
$m \to \infty$ for all $j \leq k_{i+1}$.  

Since $P_{k_{i+1}}^m v_j \to_{L^2} v_j$ as 
$m \to \infty$ for all $j \leq k_{i+1}$, we have 
$\langle P_{k_{i+1}}^m v_\mu,P_{k_{i+1}}^m v_\nu 
\rangle_{L^2} \to \delta_{\mu\nu}$ as $m \to \infty$ for 
all $\mu,\nu \leq 
k_{i+1}$.  By (6.6), $\sum_{l=k_i+1}^{k_{i+1}} 
a_{l,\mu}^{(m)} a_{l,\nu}^{(m)} = 
\langle P_{k_{i+1}}^m v_\mu, P_{k_{i+1}}^m v_\nu 
\rangle_{L^2} - \sum_{l=1}^{k_{i}} 
a_{l,\mu}^{(m)} a_{l,\nu}^{(m)} \to \delta_{\mu\nu}$ as 
$m \to \infty$ when $\mu,\nu \in [k_i+1,k_{i+1}]$.  
So the matrix 
$(a_{\mu,\nu}^{(m)})_{k_i+1 \leq \mu,\nu \leq k_{i+1}}$ is 
almost orthogonal for large $m$.  This implies, 
as (6.7) holds for all $j \in [k_i+1,k_{i+1}]$, that 
$\gamma_{k_i+1} = ... = \gamma_{k_{i+1}} = \beta_{k_i+1}$.  
\end{proofspec}

\section{Numerical computation of the index}

\begin{table}[htb]
    \begin{center}
    \begin{tabular}{l|c|c|c|c|c}
      & Theorem 5.1's& Theorem 6.1's & &range of & range of 
       $A_m$'s \\
        ${\cal W}_{\ell/n}$  & lower bound 
     &   estimate  & $A_m$ & $A_m$'s negative & 
           first 6 positive\\
   &  for 
  Ind(${\cal W}_{\ell/n}$) &  for 
  Ind(${\cal W}_{\ell/n}$) & &eigenvalues & eigenvalues \\
\hline
      ${\cal W}_{3/2}$ &$8$& $10$ or $11$ &$A_{181}$ &
              $(-35.4,-1.47)$     & $(0.059,0.65)$    \\ \hline
      ${\cal W}_{4/3}$ &$9$ & $9$ or $10$ & $A_{81}$ &
               $(-10.2,-0.63)$    & $(0.26,0.47)$    \\ \hline
      ${\cal W}_{5/3}$ & $11$& $11$ or $12$ &$A_{85}$ &
              $(-81.3,-50.6)$         & $(0.093,5.75)$    \\ \hline
      ${\cal W}_{5/4}$ & $32$& $33$ or $34$  &$A_{145}$ &
              $(-5.78,-0.035)$         & $(0.21,0.32)$    \\ \hline
      ${\cal W}_{7/4}$ &$15$ & $15$ or $16$ &$A_{145}$ &
              $(-193.7,-108.5)$         & $(3.88,11.3)$    \\ \hline 
      ${\cal W}_{6/5}$ &$19$ & $19$ or $20$ &$A_{81}$ &
               $(-4.37,-0.76)$        & $(0.12,0.19)$    \\ \hline 
      ${\cal W}_{7/5}$ &$26$ & $26$ or $27$ &$A_{145}$ &
               $(-12.9,-0.72)$        & $(0.039,0.42)$    \\ \hline 
      ${\cal W}_{8/5}$ &$11$ & $11$ or $12$ &$A_{81}$ &
               $(-48.9,-0.89)$        & $(1.83,4.21)$    \\ \hline 
      ${\cal W}_{9/5}$ &$19$ & $19$ or $20$ &$A_{145}$ &
               $(-294.3,-169.7)$        & $(9.38,21.1)$    \\ \hline 
      ${\cal W}_{7/6}$ &$--$ & $53$ or $54$ &$A_{145}$ &
              $(-3.82,-0.068)$         & $(0.11,0.37)$    \\ \hline 
      ${\cal W}_{11/6}$& $--$ & $23$ or $24$ &$A_{181}$ &
              $(-445.3,-254.5)$         & $(10.4,21.9)$    \\ \hline 
      ${\cal W}_{8/7}$ &$8$ & $34$ or $35$ &$A_{81}$ &
               $(-3.48,-0.024)$        & $(0.043,0.38)$    \\ \hline 
      ${\cal W}_{9/7}$ &$--$ & $53$ or $54$ &$A_{145}$ &
              $(-6.24,-0.074)$         & $(0.092,0.39)$    \\ \hline 
      ${\cal W}_{10/7}$ &$8$ & $17$ or $18$ &$A_{81}$ &
              $(-12.1,-1.29)$         & $(0.33,0.83)$    \\ \hline 
      ${\cal W}_{11/7}$ &$--$ & $34$ or $35$ &$A_{145}$ &
              $(-26.9,-0.037)$         & $(1.28,3.10)$    \\ \hline 
      ${\cal W}_{12/7}$ &$8$ & $13$ or $14$ &$A_{81}$ &
              $(-78.3,-57.4)$         & $(2.08,10.6)$    \\ \hline 
      ${\cal W}_{13/7}$ &$--$ & $27$ or $28$ &$A_{181}$ &
              $(-503.0,-278.3)$         & $(24.8,37.3)$    
    \end{tabular}
    \end{center}
\caption{Sharper lower bounds and numerical estimates of 
Ind(${\cal W}_{\ell/n}$).  (The values in the final two columns 
are computed using $H = 1/2$.)}
\end{table}

At first glance, the numerical 
computation of $A_m$ seems to involve a separate 
computation for each $b_{ij}$ for $1 \leq i \leq j \leq m$.  
Each $b_{ij}$ is essentially a triple integral, since 
$V$ is defined via the Jacobi $cn$ function, and $cn$ is 
defined via an integral.  Numerically estimating 
$m(m+1)/2$ triple integrals $b_{ij}$ would require much 
computer time.
However, most of the $b_{ij}$ are $0$, and there 
are clear relationships between many of the 
nonzero $b_{ij}$.  So one wishes to find a 
minimal set of integrals that will 
determine all $b_{ij}$, hopefully having much less than 
$m(m+1)/2$ integrals, thus reducing the amount of required 
numerical computation.  This is the purpose of this section.  

When $\ell$ is odd, we have 
\[ b_{11} = \frac{1}{nx_{\ell n}y_{\ell n}} 
\int_0^{nx_{\ell n}} \int_0^{y_{\ell n}} V(x,y) 
dydx \; , 
\] and 
\[ b_{1j} =  \frac{\sqrt{2}}{nx_{\ell n}y_{\ell n}} 
\int_0^{nx_{\ell n}} \int_0^{y_{\ell n}} V(x,y) 
(\sin \, \mbox{{\scriptsize or}} \, \cos)
(\frac{2\pi a x}{nx_{\ell n}}+\frac{2\pi b y}{y_{\ell n}}) 
dydx \; , 
\] for some $a,b \in \bfZ$ when $j \geq 2$, and 
\[ b_{ij} =  \frac{2}{nx_{\ell n}y_{\ell n}} 
\int_0^{nx_{\ell n}} \int_0^{y_{\ell n}} V(x,y) 
(\sin \, \mbox{{\scriptsize or}} \, \cos)
(\frac{2\pi a x}{nx_{\ell n}}+\frac{2\pi b y}{y_{\ell n}}) 
(\sin \, \mbox{{\scriptsize or}} \, \cos)
(\frac{2\pi c x}{nx_{\ell n}}+\frac{2\pi d y}{y_{\ell n}}) 
dydx \; , 
\] for some $a,b,c,d \in \bfZ$ when $i,j \geq 2$.  
When $\ell$ is even, we have 
\[ b_{11} =  \frac{2}{nx_{\ell n}y_{\ell n}} 
\int_0^{nx_{\ell n}/2} \int_0^{y_{\ell n}} V(x,y) 
dydx \; , 
\] and 
\[ b_{1j} =  \frac{2 \sqrt{2}}{nx_{\ell n}y_{\ell n}} 
\int_0^{nx_{\ell n}/2} \int_0^{y_{\ell n}} V(x,y) 
(\sin \, \mbox{{\scriptsize or}} \, \cos)
(\frac{2\pi a x}{nx_{\ell n}}+\frac{2\pi b y}{y_{\ell n}}) 
dydx \; , 
\] for some $a,b \in \bfZ$ with $a+b$ even 
when $j \geq 2$, and 
\[ b_{ij} \hspace{-0.006in} = \hspace{-0.014in} 
\frac{4}{nx_{\ell n}y_{\ell n}} 
\int_0^{nx_{\ell n}/2} \int_0^{y_{\ell n}} V(x,y) 
(\sin \, \mbox{{\scriptsize or}} \, \cos)
(\frac{2\pi a x}{nx_{\ell n}}+\frac{2\pi b y}{y_{\ell n}}) 
(\sin \, \mbox{{\scriptsize or}} \, \cos)
(\frac{2\pi c x}{nx_{\ell n}}+\frac{2\pi d y}{y_{\ell n}}) 
dydx \; , \] for some 
$a,b,c,d \in \bfZ$ with $a+b$ and $c+d$ even when 
$i,j \geq 2$.  When $\ell$ is even, 
we have changed the domain of integration 
from $\{(x,y) \, | \, 0 \leq x \leq n x_{\ell n}/2, 
y_{\ell n} x / n x_{\ell n} \leq y \leq 
y_{\ell n} x / n x_{\ell n} + y_{\ell n}\}$ to 
$\{(x,y) \, | \, 0 \leq x \leq n x_{\ell n}/2, 0 \leq y \leq 
y_{\ell n}\}$, as this will not 
affect the value of the integral.

We list the following facts without proof, as they follow 
easily from the symmetries  
$V(x,y) = V(-x,y) = V(x,-y) = V((x_{\ell n}/2)-x,y) = 
V(x,(y_{\ell n}/2)-y)$ of $V$.   

\begin{itemize}
\item If $i+j$ is odd, then $b_{ij}=b_{ji}=0$.  
\item If $j$ is odd, and if $a$ and $b$ are not both even, then 
       $b_{1j}=b_{j1}=0$.  
\item For $b_{ij} \neq 0$, 
     \[ b_{ij}=b_{ji} = 
   \sum_{0 \leq A,B \leq |a|+|b|+|c|+|d|, \; A,B \mbox{ even}} 
    c_{i,j,A,B} \cdot {\cal I}(A,B) \; , \] where $c_{i,j,A,B} \in 
   \bfZ$ if $i=j=1$ or $i,j \geq 2$, and $c_{i,j,A,B}/\sqrt{2} 
   \in \bfZ$ if precisely one of $i$ or $j$ is $1$, and 
\[ {\cal I}(A,B) :=  \frac{1}{nx_{\ell n}y_{\ell n}} 
\int_0^{n x_{\ell n}} \int_0^{y_{\ell n}} V(x,y) 
(\cos \frac{2 \pi x}{n x_{\ell n}})^A 
(\cos \frac{2 \pi y}{y_{\ell n}})^B dxdy \; . \] 
The $c_{i,j,A,B}$ are easily determined by repeated use 
of the identities 
$\cos(\alpha \pm \beta) = \cos(\alpha) \cos(\beta) \mp 
\sin(\alpha) \sin(\beta)$, 
$\sin(\alpha \pm \beta) = \cos(\alpha) \sin(\beta) \pm 
\cos(\alpha) \sin(\beta)$, and $\sin^2(\alpha) = 1 - 
\cos^2(\alpha)$.  
\end{itemize}

So, for $\ell$ odd (resp. $\ell$ even), if we compute 
$A_{m=2l^2 - 2l + 1}$ (resp. $A_{m=4l^2 - 4l + 
1}$) for some $l \in \bfZ^+$, then we need compute only 
$l(l+1)/2$ (resp. $2l^2-l$) integrals of the form ${\cal I}(A,B)$.  

\begin{remark}
We can compute the number of negative 
eigenvalues of $A_m$.  By Theorem 6.1 and Lemma 2.4 this 
number or one less than this number estimates 
Ind(${\cal W}_{\ell/n}$).  These estimates are listed in 
the third column of Table 3.  The choice of $A_m$ is listed 
in this table's fourth column.  For example, for 
${\cal W}_{3/2}$, we chose $A_m = A_{181}$, and computed 
numerically that $A_{181}$ has $11$ negative eigenvalues.  
Thus Ind(${\cal W}_{3/2}$) is estimated to be 
either $10$ or $11$, as written 
in the third column.  And those $11$ negative eigenvalues 
range from $\lambda_1^{(181)} \approx -35.4$ to 
$\lambda_{11}^{(181)} \approx -1.47$ when $H = 
1/2$, as written in the fifth column.  (Changing 
$H$ changes the eigenvalues by a factor of $H$, so the 
number of negative eigenvalues is independent of $H$.)  

The size of 
$A_m$ varies with the choice of $\ell/n$, since 
we choose $A_m$ as large as possible in each 
case without creating significant numerical errors.  
(Note that as the size of $A_m$ increases, the 
$c_{i,j,A,B}$ quickly become very large, hence even very 
small numerical errors in the estimates of ${\cal I}(A,B)$ can 
result in large numerical errors, for large $m$.)  

By Lemma 2.5, Null(${\cal W}_{\ell/n}$)$\geq 6$, 
hence the first six nonnegative 
eigenvalues of $A_m$ will converge to zero as $m \to \infty$.  
This provides some indication of how close the 
eigenvalues of $A_m$ are to the first $m$ eigenvalues of ${\cal L}$.  
Hence, in Table 3's final column, we include the 
range of the first six 
positive eigenvalues of $A_m$.  For example, the first 
six positive eigenvalues of the matrix $A_{181}$ for 
${\cal W}_{3/2}$ range from 
$\lambda_{12}^{(181)} \approx 0.059$ to 
$\lambda_{17}^{(181)} \approx 0.65$ when $H=1/2$.  
\end{remark}

\end{document}